\newcommand{\A}{\mb{A}}
\newcommand{\B}{\mb{B}}
\newcommand{\I}{\mathrm{I}}
\newcommand{\s}{\ms s}
\newcommand{\U}{\mathbf U}
\renewcommand{\d}{\ms d}
\renewcommand{\H}{\ms H}
\renewcommand{\E}{\ms E}
\title{Moderate-doubling sets in $\mathbb{F}_2^n$ intersect subspaces}
\author{Alex Cohen}
\address{Alex Cohen, Courant Institute, New York University. New York, NY, USA.}
\email{alexcohen@nyu.edu}
\author{Dmitrii Zakharov}
\address{Dmitrii Zakharov, Department of Mathematics, Massachusetts Institute of Technology. Cambridge, MA, USA.}
\email{zakhdm@mit.edu}
\date{\today}
\begin{document}

\begin{abstract}
    We show that any set $A$ in $\F_2^n$ with $|A+A| \le |A|^{2-\eta}$ must intersect a subspace of dimension $O_{\eta}(\log |A|)$ in at least $|A|^{\eta - o(1)}$ elements.
\end{abstract}

\maketitle 

\section{Introduction}
Given a set $A$ in an abelian group $G$, the sumset is defined as 
\[
A + A := \{a + a'\, :\, a, a' \in A\}. 
\]
The size of the sumset lives in the range $[|A|, |A|^2]$. 
If the sumset is small, what can we say about the structure of $A$?
In this paper we focus on $G = \F_2^n$. 
In this context, Freiman's theorem~\cites{Freiman1973, GreenRuzsa2007} says that if $|A+A| \leq K |A|$, then $A$ can be covered by $O_K(1)$ translates of a subspace with size $O_K(|A|)$. 
Marton conjectured that the constants in this theorem can be taken polynomially large in $K$. This is also known as the polynomial Freiman--Ruzsa (PFR) conjecture. 
In a recent breakthrough work, Gowers, Green, Manners and Tao~\cite{GowersGreenMannersTao2025} solved this conjecture over $\F_2^n$ by proving
\[
|A+A| \leq K|A| \Longrightarrow \text{$A$ can be covered by $2K^{12}$ cosets of a subspace with size at most $|A|$.}
\]
It is easy to cover $A$ with $|A|$ many translates of the $\{0\}$ subspace, so this theorem is interesting when $|A+A| < |A|^{1+1/12}$.\footnote{Liao \cite{Liao2024} reduced the constant from 12 to 9 using a modification of their argument. On the other hand, it is known that the PFR exponent must be at least 1.4.} 

By contrast, not much is known in the \textit{moderate doubling} regime where $|A+A| = |A|^{2-\varepsilon}$.\footnote{In \cites{BatemanKatz2011, Shkredov2013, Shkredov2014} some structural results in the moderate doubling regime are obtained under certain assumptions on higher energies.}
Over $\F_2^n$, we prove moderate doubling sets have large intersection with a bounded size subspace. 
\begin{theorem}\label{thm:combinatorial-corollary}
For all $\varepsilon >0$ there exists $L>0$ such that the following holds. 
For any set $A \subset \F_2^n$ with $|A+A| = |A|^{2-\eta}$, there exists a subspace $V \subset \F_2^n$ such that $|V| \le |A|^{L}$ and 
\[
\E_{a \in A} \log |A \cap (V+a)| \ge (\eta-\varepsilon)\log |A|.
\]
\end{theorem}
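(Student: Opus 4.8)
The plan is to first reformulate the statement entropically. Let $a$ be uniform on $A$, and for a subspace $V \subseteq \F_2^n$ let $\pi_V \colon \F_2^n \to \F_2^n / V$ be the quotient map. Conditioned on its coset $\pi_V(a)$, the point $a$ is uniform on $A \cap (V+a)$, so averaging $\log$ of the fiber sizes with the correct weights gives
\[
\E_{a \in A} \log |A \cap (V+a)| = \H(a \mid \pi_V(a)) = \log|A| - \H(\pi_V(a)),
\]
using $\H(a) = \log|A|$ and the chain rule. Thus the theorem asks for a subspace $V$ with $\dim V \le L\log|A|$ and $\H(\pi_V(a)) \le (1-\eta+\varepsilon)\log|A|$, where $(1-\eta)\log|A| = \log|A+A| - \log|A|$ is the log-doubling of $A$, a fixed budget. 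A bound in the right direction is free if we allow conditioning on something other than a subspace: for independent uniform $a_1, a_2 \in A$,
\[
\H(a_1 \mid a_1 + a_2) = 2\log|A| - \H(a_1+a_2) \ge 2\log|A| - \log|A+A| = \eta\log|A|,
\]
so the random sum $a_1 + a_2$ already leaves $\eta\log|A|$ bits in $a_1$. The whole content of the theorem is to trade the conditioning on $a_1 + a_2$ for conditioning on a genuine, low-dimensional subspace quotient at a cost of at most $\varepsilon\log|A|$.

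To produce such a $V$ I would run an iterative argument in the spirit of Gowers--Green--Manners--Tao: maintain a subspace $V$ (initially $\{0\}$) and a potential measuring the excess $\H(\pi_V(a)) - (\log|A+A|-\log|A|)$, and repeatedly enlarge $V$ by a well-chosen direction (equivalently, pass to a further quotient) so that the potential strictly decreases. The engine certifying that a good enlarging direction exists whenever the potential is still $\gg \varepsilon\log|A|$ would be the entropic form of polynomial Freiman--Ruzsa from \cite{GowersGreenMannersTao2025}, applied not to $a$ itself but to auxiliary variables built from $a_1, a_2$ and the current projection; each step adds at most $O(\log|A+A|-\log|A|) = O(\log|A|)$ to $\dim V$, and since the number of steps depends only on $\varepsilon$ (and the doubling only helps), the final subspace has dimension $O_\varepsilon(\log|A|)$, i.e.\ $|V| \le |A|^{L}$.

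The main obstacle is getting the exponent exactly right, namely reaching $\eta - \varepsilon$ rather than something strictly smaller. The covering form of \cite{GowersGreenMannersTao2025} is provably too weak for this route: it places $A$ inside $2K^{12}$ cosets of a subspace of size $\le |A|$ with $K = |A|^{1-\eta}$, which only yields $\E_{a\in A}\log|A\cap(V+a)| \ge (1 - 12(1-\eta) - o(1))\log|A|$, and by the known lower bound on the PFR exponent no covering of $A$ by cosets of a subspace of size $\le |A|$ can beat $(1 - 1.4(1-\eta))\log|A|$ in the worst case. The two features that must be exploited to get past this are that we are allowed subspaces of size up to $|A|^L$ (in the model example $A = A_0 \times B$ with $B$ additively generic, the correct $V$ is essentially $\langle A_0\rangle$, which can be a fixed power of $|A|$ larger than $|A|$) and that we measure the entropy of $\pi_V(a)$, which is genuinely smaller than the log of the number of cosets that $A$ meets. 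Purely additive-combinatorial attempts --- e.g.\ extracting structure from the popular differences of $A$ or via Balog--Szemer\'edi--Gowers --- seem to lose a power of the doubling and fall short when $\eta$ is small, so I expect a careful entropic induction, tracking exactly the quantity above, to be unavoidable.
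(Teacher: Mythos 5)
Your entropic reformulation is correct and is exactly how the paper begins: $\E_{a \in A} \log |A \cap (V+a)| = \H[\U_A \mid \pi_V(\U_A)] = \log|A| - \H[\pi_V(\U_A)]$, so the theorem reduces to finding a subspace $V$ with $|V| \le |A|^{L}$ and $\H[\pi_V(\U_A)] \le (1-\eta+\varepsilon)\log|A|$; this is precisely \Cref{cor:rich-cosets}, which the paper deduces from the main entropic result \Cref{thm:main-entropy}. Everything after that in your proposal, however, is a gap rather than a proof. The iterative scheme you describe --- enlarge $V$ one step at a time, with entropic PFR ``certifying that a good enlarging direction exists'' whenever the potential is still large --- cannot work as stated, for the reason you yourself point out: PFR, in covering or entropic form, gives nontrivial structure only when the doubling is nearly maximal. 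Quantitatively, for $X$ a copy of $\U_A$ one has $\d[X;X'] = (1-2\eta)\H[X]$, so \Cref{cor:KillingSubspacePFR} yields a useful subspace only for $\eta > 11/24$; for small $\eta$ it certifies nothing, and no single application of PFR to ``auxiliary variables built from $a_1,a_2$ and the current projection'' can serve as the per-step engine.

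What actually closes the argument is an induction on $\eta$ downward from the base case $\eta = 1/2$ (where $X+Y$ has minimal entropy and the variables are uniform on a subspace). Given $\s[X;Y] \ge \eta(\H[X]+\H[Y])$, one applies the four GGMT moves (two sumset moves, two fiber moves); if any move increases the relative doubling one invokes the inductive hypothesis on the new pair, and if none does the endgame (\Cref{lem:Endgame}) combined with PFR applies. The step your sketch omits entirely --- and the paper's main new contribution --- is that applying the inductive hypothesis to the fibers $X_u = (X_1 \mid X_1+Y_2 = u)$ and $Y_w = (Y_1 \mid Y_1+X_2 = w)$ produces a \emph{different} subspace $V(u,w)$ for each pair $(u,w)$, and these must be amalgamated into a single subspace that removes entropy from $Y$ itself. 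This is \Cref{lem:LocalToGlobal}: fix a typical $u$, sample $w^{(1)},\dots,w^{(k)}$ independently, set $\bar V = V(u,w^{(1)}) + \dots + V(u,w^{(k)})$ with $k$ chosen by a pigeonhole argument so that one further random summand barely decreases $\H[\pi_{\bar V}(X_u)]$, and then use subspace submodularity together with \Cref{lem:YSizeLowerBd} to transfer the entropy decrement from the $X$-fibers to $Y$. Without this local-to-global mechanism, or a substitute for it, your outline does not yield the theorem.
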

In particular, some translate of $V$ intersects $A$ in at least $|A|^{\eta - \varepsilon}$ points. A related result was proven by Bateman and Katz \cite[Section 5]{BatemanKatz2012}: they showed that if $|A+A| \le |A|^{2-\eta}$ then $|A\cap V| \ge |A|^{\eta-o(1)} \dim V$ for some subspace $V$ of dimension at most $|A|^{\frac{1-\eta}{2}-o(1)}$. Our Theorem \ref{thm:combinatorial-corollary} improves the bound on the dimension from polynomial to logarithmic.

We consider three examples of sets with moderate doubling.

\begin{itemize}
    \item \textbf{Random subset of a subspace}. Let $V \subset \F_2^n$ be a subspace of size $N^{ \frac{2}{1+\varepsilon} }$ and let $A \subset V$ be a uniformly random subset of size $N$. Then $|A+A| \le |A|^{2-\varepsilon}$.

    \item \textbf{Union of cosets}. Let $V \subset \F_2^n$ be a subspace of size $N^\varepsilon$, let $\Lambda \subset \F_2^n$ be a random set of size $N^{1-\varepsilon}$, and let $A = V+\Lambda$. Then $|A+A| \le |A|^{2-\varepsilon}$.

    \item \textbf{Bernoulli}. Let $p \in (0,1/2]$ and, for sufficiently large $n$, let $A \subset \F_2^n$ be the set of vectors with at most $pn$ non-zero coordinates. 
    The size of $A$ is estimated by 
    \[
    \log |A| = (1+o_n(1)) n H(p)
    \]
    where $H(p) = -p \log p - (1-p)\log (1-p) $ is the binary entropy function.
    Vectors in $A+A$ have at most $2pn$ non-zero coordinates, so 
    \[
    \log |A+A| = (1+o_n(1)) n H(2p).
    \]
    For small $p$, we may approximate $H(p) = p \log (e/p) + o_p(p)$, leading to the doubling estimate 
    \[
    \log \frac{|A|^2}{|A+A|} = (1+o_n(1))(2p \log (e/p) - 2p\log(e/2p)+o_p(p)) =  2p (\log 2+o(1)).
    \]
    It follows that $|A+A| \le |A|^{2-\varepsilon}$ with $\varepsilon = \frac{2+o(1)}{\log_2 (1/p)}$.
\end{itemize}

The first two examples play an important role in Gowers--Green--Manners--Tao's proof of the PFR conjecture over $\F_2$.
% of the PFR conjecture over $\F_2$. 
The Bernoulli example represents a new behavior in the moderate doubling regime.
One can combine all three examples: take a bunch of cosets of some subspace $V$, put the Bernoulli example inside each coset, and then take a random subset of the result. 
These are pretty much the only examples we know of sets with moderate doubling in $\F_2^n$. 
All these examples have polynomially large intersections with subspaces, agreeing with Theorem~\ref{thm:combinatorial-corollary}.

Quantitatively, the Bernoulli example shows that the smallest possible $L$ in Theorem \ref{thm:combinatorial-corollary} must be at least exponential in $\varepsilon^{-1}$. On the other hand, our proof provides an upper bound $L \le \varepsilon^{-C\varepsilon^{-1}}$ (see Section \ref{sec:inductive-step}).

%See Remark \blue{TODO} for a discussion of quantitative aspects of this result. 
Theorem~\ref{thm:combinatorial-corollary} follows from a more precise entropic statement, which we discuss next. The entropic approach to additive combinatorics was introduced by Ruzsa \cite{Ruzsa2009} and developed by Tao~\cite{Tao2010} and has proven to be a very useful tool in the field \cites{GreenMannersTao2025, GreenRuzsa2019, Kontoyiannis2014} and in its applications \cite{Hochman2014}. 
% The resolution of the PFR conjecture over $\F_2$ by Green, Gowers, Manners, Tao crucially relies on a \textit{fibring inequality} for entropic doubling, which does not have an adequate combinatorial analogue. For the same reason, we work with entropic doubling. 
% We introduce the fibring inequality and other tools from entropic sumset calculus in Section \ref{subsec:pfr-tools}.

Let $X$ be a random variable taking values in an abelian group $G$. 
The Shannon entropy of $X$, defined in~\eqref{eq:ShannonEntropy}, measures the size of $X$. 
For example, if $A$ is a subset of $G$ and $\U_A$ is a random variable uniformly distributed over $A$, then $\H[\U_A] = \log |A|$.
The {\em entropic doubling constant} of $X$ is defined by $\sigma =\H[X_1+X_2]-\H[X]$, where $X_1$ and $X_2$ are independent copies of $X$.  In the case $X = \U_A$,
\[
\H[X_1+X_2] \le  \log |A+A| \le \log |A| + \log K = \H[X] +\log K,
\]
so the entropic doubling constant $\sigma$ is upper bounded by the logarithm of the usual doubling constant $K$. If we know that $|A+A| \le |A|^{2-\varepsilon}$, this implies that $\H[X_1+X_2] \le (2-\varepsilon) \H[X]$. 
The following theorem is our main result about entropic doubling.
We denote the quotient map by a subspace $V$ as $\pi_V: G \to G/V$.  
% For a subspace $V \subset G$ we let $\pi_V: G\to G/V$ denote the quotient map.

\begin{theorem}\label{thm:main-entropy}
    For every $\varepsilon>0$ there exists $L>0$ with the following property. 
    Let $X$ and $Y$ be independent random variables on $G=\F_2^n$. Then there exists a subspace $V \subset G$ with $\H[\U_V] \le L (\H[X]+\H[Y])$ and such that
    \[
    \H[ \pi_V(X) + \pi_V(Y) ] \ge \H[\pi_V(X)] + \H[\pi_V(Y)] - \varepsilon (\H[X] + \H[Y]).
    \]
\end{theorem}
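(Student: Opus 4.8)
The plan is to prove Theorem~\ref{thm:main-entropy} by induction on $\H[X]+\H[Y]$ (or rather on a discretized/quantized version of it). The statement has the flavor of a "structure vs. pseudorandomness" dichotomy: either $X,Y$ are already "entropically Sidon-like modulo nothing" — i.e., $\H[X+Y]$ is already close to $\H[X]+\H[Y]$ — in which case we may take $V = \{0\}$ and are done; or $\H[X+Y]$ is substantially smaller than $\H[X]+\H[Y]$, in which case we want to extract a nontrivial subspace $V_0$ of bounded dimension that "captures" some of this entropy deficit, quotient by it, and recurse on $\pi_{V_0}(X), \pi_{V_0}(Y)$ inside $G/V_0$. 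The final subspace $V$ will be the preimage under successive quotients of the subspaces produced along the way, and we need the total dimension to stay linear in $\H[X]+\H[Y]$, which forces the recursion to terminate after $O_\varepsilon(1)$ steps — this is exactly where the tower-type bound $L \le \varepsilon^{-C\varepsilon^{-1}}$ comes from.

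\medskip

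\textbf{The key steps, in order.} First I would set up the right potential function: something like $\Phi(X,Y) = \H[X] + \H[Y] - \H[X+Y]$, the entropic doubling deficit, which is always nonnegative (by submodularity / Ruzsa-type entropy inequalities) and which we want to show can be "discharged" into a bounded-dimensional subspace. Second, I would prove the core dichotomy lemma: if $\Phi(X,Y) \geq \varepsilon(\H[X]+\H[Y])$, then there is a nonzero subspace $V_0 \subseteq G$ with $\H[\U_{V_0}]$ bounded (ideally by $O_\varepsilon(1)$, or at worst by a small constant fraction of $\H[X]+\H[Y]$) such that passing to $\pi_{V_0}(X), \pi_{V_0}(Y)$ genuinely decreases the relevant complexity — for instance $\H[\pi_{V_0}(X)] + \H[\pi_{V_0}(Y)] \leq (\H[X]+\H[Y]) - c(\varepsilon)$ for some explicit gain, while not destroying too much doubling structure. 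The natural way to find such a $V_0$: if $X+Y$ has low entropy relative to $X,Y$, then the fibers of the addition map are large, which should give a near-subgroup structure; one then invokes an entropic Freiman-type result or a Plünnecke/Ruzsa-calculus argument to locate an honest subspace $V_0$ on which $X$ (or $X-X$, or $X+Y$) concentrates. Third, I would run the induction: apply the dichotomy, quotient, recurse, and reassemble $V$ as the full preimage $\pi_{V_0}^{-1}(V_1)$ where $V_1$ is the subspace obtained for the quotiented variables; track that $\H[\U_V] = \H[\U_{V_0}] + \H[\U_{V_1}]$ so dimensions add, and that the number of iterations is $O_\varepsilon(1)$. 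Fourth, one checks the base case and the bookkeeping: at termination $\Phi \leq \varepsilon(\cdots)$ gives exactly the conclusion $\H[\pi_V(X)+\pi_V(Y)] \geq \H[\pi_V(X)] + \H[\pi_V(Y)] - \varepsilon(\H[X]+\H[Y])$, possibly after absorbing constants by rescaling $\varepsilon$.

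\medskip

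\textbf{The main obstacle} I anticipate is the dichotomy lemma — producing an \emph{honest subspace} $V_0$ (not just an approximate one, and not just a coset progression) of \emph{bounded dimension} from the hypothesis of small entropic doubling deficit, while simultaneously guaranteeing a quantitative \emph{decrease} in the inductive parameter. The subtlety is that $\Phi(X,Y)$ being large does not by itself pin down a single subspace: the entropy deficit could be "spread out" (as in the Bernoulli example, where the deficit is genuinely diffuse and no small subspace captures a constant fraction of it in one step). So the lemma must be content with extracting only a \emph{bounded amount} of structure per step — enough to make progress, but the per-step gain $c(\varepsilon)$ will be tiny (exponentially small in $1/\varepsilon$), which is why the iteration count, and hence $L$, blows up to $\varepsilon^{-C\varepsilon^{-1}}$. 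Getting the entropy inequalities to close — in particular controlling $\H[\pi_{V_0}(X)+\pi_{V_0}(Y)]$ from below in terms of the original doubling after quotienting, so that the recursive hypothesis still applies with a comparable $\varepsilon$ — is the delicate technical heart, and I would expect it to rely on careful use of conditional entropy, the chain rule, and Ruzsa's entropy triangle inequality together with a Fourier-analytic or combinatorial input (in the spirit of Bateman--Katz) to locate $V_0$.
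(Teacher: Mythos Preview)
Your outer shell is right and matches the paper: the reduction ``iterate the dichotomy lemma, quotient, recurse until the doubling deficit drops below $\varepsilon(\H[X]+\H[Y])$'' is exactly the paper's implication $\mathbf{A}(\eta,L,c)\Rightarrow\mathbf{B}(\eta,\varepsilon,\lceil c^{-1}\log\varepsilon^{-1}\rceil L)$ (Lemma~\ref{lem:BasicRelationsAB}(b)). You have also correctly identified that the entire difficulty sits inside the dichotomy lemma itself --- what the paper calls statement~$\mathbf{A}$: from $\s[X;Y]\geq\eta(\H[X]+\H[Y])$, produce a subspace of dimension $O_\eta(\H[X]+\H[Y])$ whose quotient kills a $c(\eta)$-fraction of the total entropy.

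The gap is in how you propose to prove that lemma. Invoking PFR, Ruzsa calculus, or a Bateman--Katz style argument directly will not work in the moderate-doubling regime: PFR only bites when $\d[X;Y]$ is small (i.e.\ $\eta$ close to $1/2$), and Bateman--Katz gives a subspace of \emph{polynomial} rather than logarithmic dimension. The paper's mechanism is entirely different and is not an induction on $\H[X]+\H[Y]$ at all. It is an induction on the \emph{doubling-ratio parameter}~$\eta$, descending from the base case $\eta=1/2$ (where PFR applies) by increments of size $\sim\eta^2$. The inductive step imports the Gowers--Green--Manners--Tao machinery wholesale: one considers the four ``moves'' (two sumset pairs, two fiber pairs), shows via the fibring identity that if none of them improves the ratio then the GGMT endgame forces the fibers $X_u=(X_1\mid X_1+Y_2=u)$ and $Y_w=(Y_1\mid Y_1+X_2=w)$ to each live in few cosets of some subspace $V(u,w)$, and then --- this is the genuinely new ingredient, Lemma~\ref{lem:LocalToGlobal} --- shows how to glue the \emph{family} $\{V(u,w)\}$ into a \emph{single} subspace $\bar V=\sum_{j\le k}V(u,w^{(j)})$ (for carefully chosen $k$ via pigeonhole on $\H[\pi_{V_{\le j}}(X_u)]$) that kills a definite fraction of $\H[Y]$. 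The fiber moves are precisely where an induction on total entropy would run into trouble, since applying the inductive hypothesis to each fiber separately gives you a different subspace per fiber; the local-to-global lemma is what rescues this, and nothing in your sketch anticipates it. The $\varepsilon^{-C\varepsilon^{-1}}$ bound comes not from the outer iteration but from the $\sim\eta^{-1}$ steps needed in the $\eta$-induction, each costing a factor $\eta^{-O(1)}$ in~$L$.
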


In words, after quotioning out by a subspace of dimension $O(\H[X]+\H[Y])$ we can kill off almost all of the additive interaction between variables $X$ and $Y$. For example, one can take both $X, Y$ to be independent copies of $\U_A$ for some subset $A \subset G$. 
Below are some corollaries of this theorem.  

\begin{corollary}\label{cor:rich-cosets}
    For every $\varepsilon >0$ there exists $L>0$ with the following property. 
    Let $X$ and $Y$ be independent random variables on $G=\F_2^n$ and suppose that $\H[X+Y] = \H[X]+\H[Y] - s$ for some $s\ge 0$. Then there exists a subspace $V \subset G$ with $\H[\U_V] \le L (\H[X]+\H[Y])$ and such that
    \begin{equation}\label{eq:cor-inequality}
    \H[ X | \pi_V(X) ], \H[Y|\pi_V(Y)] \ge s - \varepsilon(\H[X]+\H[Y]).    
    \end{equation}
\end{corollary}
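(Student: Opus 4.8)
The plan is to read the corollary off from Theorem~\ref{thm:main-entropy} together with two bookkeeping identities for Shannon entropy; I do not expect the argument to be long. Write $h=\H[X]+\H[Y]$. First I would apply Theorem~\ref{thm:main-entropy} with the same $\varepsilon$ to obtain a subspace $V$ with $\H[\U_V]\le Lh$. Since $\pi_V$ is a group homomorphism we have $\pi_V(X)+\pi_V(Y)=\pi_V(X+Y)$, and the conclusion of the theorem is precisely the statement that the quotiented deficiency is small:
\[
D_V:=\H[\pi_V(X)]+\H[\pi_V(Y)]-\H[\pi_V(X+Y)]\le\varepsilon h .
\]
Next I would record the elementary identity $\H[X\mid X+Y]=\H[X,X+Y]-\H[X+Y]=\H[X,Y]-\H[X+Y]=s$, using independence of $X,Y$ and the fact that $(X,X+Y)$ and $(X,Y)$ determine each other; the same computation inside $G/V$ gives $\H[\pi_V(X)\mid\pi_V(X+Y)]=D_V\le\varepsilon h$. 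Both identities also hold with the roles of $X$ and $Y$ interchanged.

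It then remains to establish $\H[X\mid\pi_V(X)]\ge s-\varepsilon h$, the bound for $Y$ following from the symmetric argument with the \emph{same} $V$. I would run the chain
\[
\H[X\mid\pi_V(X)]=\H[X\mid\pi_V(X),\pi_V(Y)]=\H[X\mid\pi_V(X),\pi_V(X+Y)]\ge\H[X\mid\pi_V(X),X+Y],
\]
justified respectively by: $\pi_V(Y)$ is a function of $Y$ and hence independent of the pair $(X,\pi_V(X))$, so conditioning on it is free; $(\pi_V(X),\pi_V(Y))$ and $(\pi_V(X),\pi_V(X+Y))$ determine one another; and $\pi_V(X+Y)$ is a function of $X+Y$, so refining the conditioning variable cannot increase entropy. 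Finally, applying the chain rule to $X$ and its function $\pi_V(X)$ conditionally on $X+Y$,
\[
\H[X\mid\pi_V(X),X+Y]=\H[X\mid X+Y]-\H[\pi_V(X)\mid X+Y]\ge s-\H[\pi_V(X)\mid\pi_V(X+Y)]\ge s-\varepsilon h ,
\]
where I use $\H[X\mid X+Y]=s$, the monotonicity $\H[\pi_V(X)\mid X+Y]\le\H[\pi_V(X)\mid\pi_V(X+Y)]$, and $D_V\le\varepsilon h$. Chaining the two displays gives \eqref{eq:cor-inequality}, and $L$ may be taken to be the constant produced by Theorem~\ref{thm:main-entropy}.

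I do not anticipate a genuine technical obstacle; the only point requiring thought is conceptual. Theorem~\ref{thm:main-entropy} controls the \emph{joint} additive deficiency of $\pi_V(X)$ and $\pi_V(Y)$ after quotienting, whereas \eqref{eq:cor-inequality} is a statement about $X$ alone. The device bridging the two is to insert $X+Y$ as an auxiliary variable: conditioning on $X+Y$ "uses up" exactly $s$ bits of $X$, and the loss incurred when one replaces $X+Y$ by its projection $\pi_V(X+Y)$, and then by $\pi_V(X)$, is bounded by precisely the quantity $D_V$ that Theorem~\ref{thm:main-entropy} makes small. Once this is seen, everything reduces to the chain rule and the data-processing inequality for entropy.
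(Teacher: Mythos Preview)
Your proof is correct and follows essentially the same route as the paper: apply Theorem~\ref{thm:main-entropy} to bound the quotiented doubling mass, then a short entropy computation recovers~\eqref{eq:cor-inequality}. The paper compresses your chain of conditionings into one line by quoting the trivial bound~\eqref{eq:TrivialEstimateDoublingMass} and the fibring identity~\eqref{eq:FibringIdentity}, obtaining $\min(\H[X\mid\pi_V(X)],\H[Y\mid\pi_V(Y)])\ge \s[X\mid\pi_V(X);Y\mid\pi_V(Y)]\ge \s[X;Y]-\s[\pi_V(X);\pi_V(Y)]$; your argument is an explicit unwinding of the same two facts.
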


\begin{proof}[Proof of Theorem \ref{thm:combinatorial-corollary}]
     Let $A$ be a set with $|A+A| \le |A|^{2-\eta}$, and set $X$ and $Y$ to be independent copies of $\U_A$. Then we have $\H[X+Y] \le (2-\eta) \log |A| = \H[X]+\H[Y] -\eta \log |A|$.
     So for any $\varepsilon>0$, there exists a subspace $V$  such that $\H[\U_V] \le L \H[\U_A]$, i.e. $|V| \le |A|^L$, and that $\H[ \U_A~|~ \pi_V(\U_A) ] \ge (\eta-\varepsilon) \log |A|$. On the other hand, one can verify $\H[ \U_A~|~ \pi_V(\U_A) ] = \E_{a \in A} \log |A \cap (V+a)|$ giving the desired estimate.
\end{proof}

By writing $\H[\pi_V(X)] = \H[X+\U_V] - \H[\U_V]$, (\ref{eq:cor-inequality}) can be restated as
\begin{equation}\label{eq:conjecture3.1}
\H[X+\U_V] - \H[\U_V] \le \H[X+Y] - \H[Y] + \varepsilon(\H[X]+\H[Y]).    
\end{equation}
It was conjectured in \cite[Conjecture 3.1]{Abbe2024} that for any $\varepsilon >0$ and any identically distributed $X$ and $Y$ there exists a subspace $V$ of dimension at most $e^{C\varepsilon^{-1}}\H[X]$ such that (\ref{eq:conjecture3.1}) holds. Corollary \ref{cor:rich-cosets} implies this with a slightly worse estimate $\varepsilon^{-C\varepsilon^{-1}}$. 

Similar results hold for sums of more than two variables. 

\begin{corollary}
    \label{cor:many-sums}
    For every $k \ge 2, \varepsilon>0$ there exists $L>0$ with the following property. 
    Let $X_1, \ldots, X_k$ be independent random variables on $G=\F_2^n$. Then there exists a subspace $V \subset G$ with $\H[\U_V] \le L (\H[X_1]+\ldots+\H[X_k])$ and such that
    \[
    \H[ \pi_V(X_1) +\ldots + \pi_V(X_k) ] \ge \H[\pi_V(X_1)] +\ldots + \H[\pi_V(X_k)] - \varepsilon (\H[X_1] +\ldots + \H[X_k]).
    \]
\end{corollary}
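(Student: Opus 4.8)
The plan is to induct on $k$, the base case $k=2$ being exactly Theorem~\ref{thm:main-entropy}. For the inductive step the starting point is the telescoping identity, valid for every subspace $V$:
\begin{align*}
\sum_{i=1}^k \H[\pi_V(X_i)] - \H[\pi_V(X_1+\cdots+X_k)]
&= \Big(\H[\pi_V(X_1)] + \H[\pi_V(X_2+\cdots+X_k)] - \H[\pi_V(X_1+\cdots+X_k)]\Big) \\
&\quad + \Big(\sum_{i=2}^k \H[\pi_V(X_i)] - \H[\pi_V(X_2+\cdots+X_k)]\Big).
\end{align*}
This splits the $k$-fold defect into the pairwise defect of $\pi_V(X_1)$ against $\pi_V(X_2+\cdots+X_k)$, plus the $(k-1)$-fold defect of $\pi_V(X_2),\dots,\pi_V(X_k)$; so it suffices to produce one $V$ of dimension $O_k(\sum_i \H[X_i])$ making both brackets at most $\tfrac{\varepsilon}{2}\sum_i \H[X_i]$.

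First I would apply Theorem~\ref{thm:main-entropy} to $X_1$ and $Y:=X_2+\cdots+X_k$ to get $V_1$ with $\H[\U_{V_1}]\le L_1(\H[X_1]+\H[Y])\le L_1\sum_i\H[X_i]$ (using $\H[Y]\le\sum_{i\ge 2}\H[X_i]$) for which $\pi_{V_1}(X_1)$ and $\pi_{V_1}(Y)$ are $\varepsilon'$-close to transverse, where $\varepsilon'$ is a small parameter to be fixed in terms of $\varepsilon,k$. Then, working inside $G/V_1$, I would apply the inductive hypothesis to the $k-1$ independent variables $\pi_{V_1}(X_2),\dots,\pi_{V_1}(X_k)$, obtaining $\bar V_2\subseteq G/V_1$ with $\H[\U_{\bar V_2}]\le L_2\sum_{i\ge2}\H[\pi_{V_1}(X_i)]\le L_2\sum_i\H[X_i]$; set $V:=\pi_{V_1}^{-1}(\bar V_2)$, so that $\pi_V=\pi_{\bar V_2}\circ\pi_{V_1}$, $V\supseteq V_1$, and $\H[\U_V]=\H[\U_{V_1}]+\H[\U_{\bar V_2}]=O_k(\sum_i\H[X_i])$. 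The second bracket above for this $V$ is precisely the $(k-1)$-fold defect supplied by the inductive hypothesis — crucially, no further quotient is applied after invoking it — so it is at most $\varepsilon'\sum_{i\ge2}\H[\pi_{V_1}(X_i)]\le\varepsilon'\sum_i\H[X_i]$. Choosing $\varepsilon'$ and the base-case parameter small enough absorbs all the error terms, and $L=L(\varepsilon,k)$ is built from $L_1$ and $L_2$.

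The main obstacle is the first bracket for $V$: I only know that $\pi_{V_1}(X_1)$ and $\pi_{V_1}(Y)$ are nearly transverse, but $V$ is strictly larger than $V_1$, and passing to a finer quotient can \emph{increase} the quantity $\H[A]+\H[B]-\H[A+B]$ for independent $A,B$ — it is not monotone; for instance two transverse variables whose supports become ``aligned'' modulo an extra subspace can acquire a positive defect. I would control this by the bound ``(first bracket for $V$)\,$\le$\,(first bracket for $V_1$)\,$+\,\H[\pi_{V_1}(S)\mid\pi_V(S)]$'' with $S=X_1+\cdots+X_k$, i.e. the error is the entropy of $\pi_{V_1}(S)$ destroyed by the extra quotient by $\bar V_2$, and then argue this last term is small. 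To make that work I would first upgrade Theorem~\ref{thm:main-entropy} (and the inductive statement) so that the returned subspace always lies inside $\langle\operatorname{supp}(X)\rangle+\langle\operatorname{supp}(Y)\rangle$ after translating the inputs to contain $0$ in their supports — which is free, since intersecting $V$ with that subspace changes none of the entropies $\H[\pi_V(X)],\H[\pi_V(Y)],\H[\pi_V(X+Y)]$ — so that $\bar V_2\subseteq\sum_{i\ge2}\langle\operatorname{supp}(\pi_{V_1}(X_i))\rangle$; then, using that $\pi_{V_1}(X_1)$ is nearly transverse to $\pi_{V_1}(Y)$ (hence to each $\pi_{V_1}(X_i)$, $i\ge2$), one should be able to show that quotienting by such a $\bar V_2$ perturbs $\pi_{V_1}(X_1)$, and hence the defect, by an amount controlled by the $(k-1)$-fold defect already made small. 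I expect this step to be the technical heart; should it resist being extracted from Theorem~\ref{thm:main-entropy} as a black box, the fallback is to prove the $k$-variable statement by re-running the proof of Theorem~\ref{thm:main-entropy} with $k$ variables throughout. Finally, taking all $X_i$ to be independent copies of $\U_A$ recovers the $k$-fold analogue of Theorem~\ref{thm:combinatorial-corollary} exactly as in the proof given above.
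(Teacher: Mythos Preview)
Your telescoping decomposition and the bound
\[
\s[\pi_V(X_1);\pi_V(Y)] \le \s[\pi_{V_1}(X_1);\pi_{V_1}(Y)] + \H[\pi_{V_1}(S)\mid\pi_V(S)]
\]
are both correct, but the step you flag as ``the technical heart'' is a genuine gap, and the fix you sketch does not close it. Restricting $\bar V_2$ to lie in $\sum_{i\ge2}\langle\operatorname{supp}\pi_{V_1}(X_i)\rangle$ is free, but near-transversality $\s[\pi_{V_1}(X_1);\pi_{V_1}(Y)]\approx 0$ only tells you that $(\operatorname{supp}\pi_{V_1}(X_1)+\operatorname{supp}\pi_{V_1}(X_1))$ meets $(\operatorname{supp}\pi_{V_1}(Y)+\operatorname{supp}\pi_{V_1}(Y))$ trivially; it says nothing about the full span $\langle\operatorname{supp}\pi_{V_1}(Y)\rangle$, which is where $\bar V_2$ lives and which can be vastly larger. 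So there is no reason $\bar V_2$ should leave $\pi_{V_1}(X_1)$ nearly unchanged. Moreover, even if you did show $\H[\pi_{V_1}(X_1)\mid\pi_V(X_1)]$ is small, the identity
\[
\s[\pi_V(X_1);\pi_V(Y)]-\s[\pi_{V_1}(X_1);\pi_{V_1}(Y)]=\H[\pi_{V_1}(S)\mid\pi_V(S)]-\H[\pi_{V_1}(X_1)\mid\pi_V(X_1)]-\H[\pi_{V_1}(Y)\mid\pi_V(Y)]
\]
shows the quantity you need to bound is $\H[\pi_{V_1}(S)\mid\pi_V(S)]$, and $\bar V_2$ was \emph{designed} to interact with $Y=X_2+\cdots+X_k$, so there is no reason this should be small. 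Your parenthetical ``hence to each $\pi_{V_1}(X_i)$'' is also not free: one only gets $\s[X_1;X_i]\le\s[X_1;Y]+\s[X_i;\sum_{j\neq 1,i}X_j]$, with an extra error you have not yet controlled at that stage. Your fallback of rerunning the entire proof of Theorem~\ref{thm:main-entropy} with $k$ variables would work but is overkill.

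The paper sidesteps all of this by using Corollary~\ref{cor:rich-cosets} rather than Theorem~\ref{thm:main-entropy} as the black box. The point is that Corollary~\ref{cor:rich-cosets} converts a large pairwise defect $\s[A;B]\ge s$ into a subspace that \emph{drops the entropy of one of the variables} by essentially $s$, not merely one that drops the defect. This gives a monotone potential: start with $W_0=\{0\}$; whenever some partial-sum defect
\[
\s\bigl[\pi_{W_i}(X_1+\cdots+X_j);\,\pi_{W_i}(X_{j+1})\bigr]\ge\delta\sum_\ell\H[X_\ell]
\]
is large, apply Corollary~\ref{cor:rich-cosets} to enlarge $W_i$ to $W_{i+1}$ so that $\H[\pi_{W_{i+1}}(X_{j+1})]$ drops by at least $\tfrac{\delta}{2}\sum_\ell\H[X_\ell]$. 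The total $\sum_\ell\H[\pi_{W_i}(X_\ell)]$ strictly decreases by a fixed amount each step and is nonnegative, so the process halts after $O(\delta^{-1})$ steps; at termination every partial-sum defect is below $\delta\sum_\ell\H[X_\ell]$, and the telescoping sum gives the $k$-fold bound with $\varepsilon=(k-1)\delta$. No step requires comparing defects before and after a further quotient, because the potential is entropy, not doubling mass.
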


Taking $X_1, \ldots, X_k$ all copies of the same random variable $X$, we get the following dichotomy: for any $\varepsilon>0$ there is a subspace $V$ of dimension $O_{\varepsilon,k}(\H[X])$, such that one of the following holds:
\begin{itemize}
    \item we have $\H[\pi_V(X)] \le \varepsilon \H[X]$,
    \item or the variable $\tilde X=\pi_V(X)$ is $(k, \varepsilon)$-dissociated, i.e. $\H[\tilde X_1+\ldots +\tilde X_k] \ge (k-\varepsilon) \H[\tilde X]$.
\end{itemize}

Our proof of Theorem \ref{thm:main-entropy} builds on the inductive strategy of Gowers, Green, Manners and Tao \cite{GowersGreenMannersTao2025}.
Let $X,Y$ be a pair of independent variables and suppose that $\H[X+Y] = (1-\eta)(\H[X]+\H[Y])$. 
Our goal is to find subspace structure as long as $\eta > 0$. 
The base case is $\eta = 1/2$, where it is known that $X$ and $Y$ must be uniformly distributed over a subspace. 
If $\eta$ is just slightly smaller than $1/2$, the polynomial Freiman--Ruzsa conjecture implies that $X$ and $Y$ are close to a subspace $V$. 
For $\eta < 1/2$, we employ induction on $\eta$. 
Let $X_1,X_2,Y_1,Y_2$ be independent copies of $X$ and $Y$. Following~\cite{GowersGreenMannersTao2025}, we consider four moves that replace $(X, Y)$ with a new pair of independent random variables: 
\begin{itemize}
    \item First sumset $X_1+Y_2, X_2+Y_1$,
    \item First fiber $(X_1 ~|~  X_1+Y_2 = u), (Y_1 ~|~ Y_1+X_2=w)$,
    \item Second sumset $X_1+X_2, Y_1+Y_2$,
    \item Second fiber $(X_1 ~|~  X_1+X_2 = u), (Y_1 ~|~ Y_1+Y_2=w)$.
\end{itemize}
If any of these four moves improves the doubling ratio of $X, Y$, then the inductive hypothesis lets us find subspace structure between the new variables. 
This reduces the problem to the case when none of the four moves improve the doubling ratio. 
In~\cite{GowersGreenMannersTao2025}, the authors use a \textit{fibring identity}, Eq.~\eqref{eq:FibringIdentity}, and a special identity valid over $\F_2$, to show that the sums of fibers $X_u =(X_1 | X_1+Y_2=u)$ and $Y_w=(Y_1 | Y_1+X_2=w)$ have a much smaller doubling ratio and so their induction can be closed. 
This last step is called the ``Endgame'' in their paper.

The key new difficulty in implementing this strategy for moderate doubling sets is the fiber move. Suppose we manage to prove Theorem \ref{thm:main-entropy} for each pair of variables $X_u = (X_1 ~|~  X_1+Y_2 = u), Y_w= (Y_1 ~|~ Y_1+X_2=w)$, where $u \sim X_1+Y_2$ and $w \sim Y_1+X_2$ are arbitrary. This means that we obtain a subspace $V(u, w)$ which explains the additive interaction between $X_u$ and $Y_w$. To complete the inductive step we then need to show that there is a single subspace $V$ explaining the interaction between $X$ and $Y$.
In Lemma \ref{lem:YSizeLowerBd} we show that this is the case. The idea is to fix a typical $u$ from $X_1+Y_2$, sample an independent sequence $w^{(1)}, \ldots, w^{(k)}$ from $Y_1+X_2$, and set 
\[
V = V(u, w^{(1)}) + \dots + V(u, w^{(k)}).
\]
For a carefully chosen value of $k$, we show that this subspace satisfies $\H[\pi_V(Y)] \le (1-c)\H[Y]$, completing the inductive step. 

In \Cref{sec:preliminaries} we recall basic definitions from entropic calculus and tools such as the fibring identity and Balog--Szemer\'edi--Gowers theorem. In \Cref{sec:endgame} we prove a version of the endgame from \cite{GowersGreenMannersTao2025}. In \Cref{sec:setup-induction} we set up intermediate statements $\A$ and $\B$ which will be used in the inductive proof. In \Cref{sec:inductive-step} we execute the inductive step and deduce the main result and its corollaries. 

\subsection{Acknowledgements.} We thank Ben Green and Terence Tao for valuable comments and for bringing the lemma of Bateman and Katz \cite{BatemanKatz2012} and Conjecture 3.1 from \cite{Abbe2024} to our attention. 

\section{Preliminaries}\label{sec:preliminaries}

\subsection{Definitions and basic identities}\label{subsec:definitions}
Throughout this paper we work in the group $G = \F_2^n$. 
We work with $G$-valued random variables, typically denoted $X, Y$. 
Our convention is that $X_j, Y_j$ denote independent copies of $X$ and $Y$. 

The entropy of a $G$-valued random variable is defined by 
\begin{equation}\label{eq:ShannonEntropy}
\H[X] := \sum_{x \in G} \Pr[X = x] (- \log \Pr[X = x]), 
\end{equation}
where if $\Pr[X = x] = 0$, the summand is set to zero. 

For a set $V$, we denote by $\U_V$ a random variable uniformly distributed over $V$. 

For possibly dependent random variables $X$ and $U$, we define
\begin{equation}\label{eq:ChainRuleCondEntropy}
\H[X | U] := \E_{u \sim U} \H[X | U = u] = \H[X, U] - \H[U]. 
\end{equation}
The equality between the left and right sides is the \textit{chain rule for entropy}. 
Whenever we condition on random variables, we implicitly take an expectation over their values.

For two possibly dependent random variables $X$ and $Y$, the mutual information is defined by 
\begin{align}
\I[X : Y] &:= \H[X] + \H[Y] - \H[X, Y] \nonumber \\ 
&= \H[X] - \H[X | Y] = \H[Y] - \H[Y | X]. \label{eq:MutualInformationConditionalEntropy}
\end{align}
If $S$ is a third random variable, the conditional mutual information is defined by
\begin{align}
\I[X : Y | S] &:= \E_{s \sim S} \I[X : Y | S = s] = \H[X | S] + \H[Y | S] - \H[X, Y | S] \nonumber\\ 
&= \H[X, S] + \H[Y, S] - \H[X, Y, S] - \H[S] \geq 0.\label{eq:Submodularity}
\end{align}
The mutual information is always nonnegative, and is zero if and only if $X$ and $Y$ are independent. The nonnegativity of the second line is known as \textit{submodularity}. 

We denote the quotient map by a subspace $V$ as 
\[
\pi_V: G \to G / V.
\]
We may also refer to this map by $\pi_{G/V}$.
We have
\[
\H[X + \U_V | \pi_V(X+\U_V)] = \H[\U_V], 
\]
so 
\begin{equation}\label{eq:EntropyOfProjection}
\H[\pi_V(X)] = \H[\pi_V(X+\U_V)] = \H[X + \U_V] - \H[\U_V].
\end{equation}
Let $V_1, V_2$ be two subspaces. Applying submodularity, Eq.~\eqref{eq:Submodularity}, to 
\[
(S, X, Y) = (X + \U_{V_1\cap V_2}, \U_{V_1}, \U_{V_2}), 
\]
yields
\[
\H[X + \U_{V_1}] + \H[X + \U_{V_2}] \geq \H[X + \U_{V_1} + \U_{V_2}] - \H[X + \U_{V_1\cap V_2}], 
\]
and subtracting off $\H[\U_{V_1}] + \H[\U_{V_2}] = \H[\U_{V_1+V_2}] + \H[\U_{V_1 \cap V_2}]$ gives 
\begin{equation}\label{eq:SubspaceSubmodularity}
     \H[\pi_{V_1}(X)] + \H[\pi_{V_2}(X)] \geq  \H[\pi_{V_1+V_2}(X)] + \H[\pi_{V_1\cap V_2}(X)] . 
\end{equation}

Let $X$ and $Y$ be possibly dependent random variables, and let $X', Y'$ be mutually independent copies of $X$ and $Y$. The \textit{doubling mass} between $X$ and $Y$ is defined by
\[
\s[X ; Y] := \H[X'] + \H[Y'] - \H[X'+Y']. 
\]
By the chain rule for entropy, Eq. ~\eqref{eq:ChainRuleCondEntropy}, if $X$ and $Y$ are independent then 
\begin{equation}\label{eq:DoublingMassFiberSize}
\s[X ; Y] = \H[X | X+Y] = \H[Y | X+Y].
\end{equation}
There is a trivial upper bound
\begin{equation}\label{eq:TrivialEstimateDoublingMass}
\s[A ; B] \leq \min\{\H[A], \H[B]\}.
\end{equation}

Let $(X, U)$ and $(Y, W)$ be pairs of random variables, the only dependencies being between $(X, U)$ and $(Y, W)$. Let
\[
X_u = (X | U = u),\qquad Y_w = (Y | W = w). 
\]
To be precise, $X_u$ and $Y_w$ are defined to be new random variables, independent of all the previous ones, with distributions equal to the conditional probabilities above. 
The conditional doubling mass is defined by 
\begin{align*}
\s[X \mid U ; Y \mid W] &:= \E_{u\sim U, w\sim W} \s[X_u ; Y_w] \\
&= \H[X | U] + \H[Y | W] - \H[X+Y | U,W].
\end{align*}
Let $V$ be a subspace and let $\pi = \pi_V$. 
Applying the entropic chain rule to $(X, \pi(X))$ gives the identity  
\begin{equation}\label{eq:EntropyFibring}
\H[A] = \H[\pi(A)] + \H[A | \pi(A)]. 
\end{equation}
Applying this identity to $X, Y$, and $X+Y$ gives
\begin{align*}
\H[X] + \H[Y] - \H[X+Y]  &= \H[\pi(X)] + \H[\pi(Y)] - \H[\pi(X+Y)] + \\ 
&\qquad \H[X | \pi(X)] + \H[Y | \pi(Y)] - \H[X+Y | \pi(X)+\pi(Y)].
\end{align*}
By the definition of mutual information, the last term can be written as
\begin{align*}
    \H[X+Y | \pi(X)+\pi(Y)] = \H[X + Y | \pi(X), \pi(Y)] + \I[X+Y : (\pi(X), \pi(Y))\, |\, \pi(X+Y)],
\end{align*}
yielding the fibring identity (cf. Proposition 4.1 in \cite{GowersGreenMannersTao2025}):
\begin{equation}\label{eq:FibringIdentity}
s[X;Y] = s[\pi(X); \pi(Y)] + s[X | \pi(X); Y | \pi(Y)] - \I[X+Y : (\pi(X), \pi(Y)) | \pi(X+Y)].
\end{equation}
Often we will just use the non-negativity of mutual information to bound the left-hand side by the right-hand side.
The term $ s[X | \pi(X); Y | \pi(Y)] $ accounts for the amount of additive interaction between $X$ and $Y$ through the subspace $V$. 

Let $W \subset V$ be two subspaces. Take an expectation over $\pi_V(X) , \pi_V(Y)$ and apply the fibring inequality to find
\begin{equation}
\label{eq:FiberInteraction}
\begin{split}
\s[X | \pi_V(X); Y | \pi_V(Y)] &= \E_{t \sim \pi_V(X) , r \sim \pi_V(Y)} \s[X_t ; Y_r]  \\ 
&\leq \E_{t,r} \bigl(\s[X_t | \pi_W(X_t) ; Y_r | \pi_W(Y_r)]  + \s[\pi_W(X_t) ; \pi_W(Y_r)]\bigr)  \\ 
&=  \s[X | \pi_W(X) ; Y | \pi_W(Y)] + \s[\pi_W(X) | \pi_V(X) ; \pi_W(Y) | \pi_V(Y)] .
\end{split}
\end{equation}
In this inequality, the interaction of $X$ and $Y$ through $V$ is bounded by the interaction through $W$, plus the interaction through $V \bmod W$. 

\subsection{Tools from Gowers--Green--Manners--Tao~\cite{GowersGreenMannersTao2025}}\label{subsec:pfr-tools}

For a pair of variables $X, Y$ we define the Ruzsa distance 
\[
\d[X ; Y] = \H[X' + Y'] - \frac{1}{2} \H[X'] - \frac{1}{2}\H[Y'],
\]
where $X', Y'$ are independent copies of $X$ and $Y$, respectively. 
There are trivial lower and upper bounds 
\[
\frac{1}{2}\H[X] - \frac{1}{2}\H[Y] \leq \d[X ; Y] \leq \frac{1}{2}\H[X] + \frac{1}{2}\H[Y]. 
\]

\begin{theorem}[Entropic Marton's Conjecture \cite{GowersGreenMannersTao2025}*{Theorem 1.8}]\label{thm:PFR}
Let $G = \F_2^n$ and suppose that $X, Y$ are $G$-valued random variables. There exists a subspace $V \subset G$ such that 
\[
\max\{\d[X; \U_V], \d[Y; \U_V]\} \leq 6 \d[X; Y].
\]
\end{theorem}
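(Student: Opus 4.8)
The plan is to follow the variational strategy of Gowers, Green, Manners and Tao. Write $k := \d[X;Y]$; the task is to produce a subspace $V$ with $\d[X;\U_V]$ and $\d[Y;\U_V]$ bounded by an absolute multiple of $k$. I would fix the given pair as reference variables $X_0 := X$, $Y_0 := Y$, choose a small absolute constant $\eta>0$ to be optimized, and consider the functional
\[
\tau[X';Y'] := \d[X';Y'] + \eta\,\d[X_0;X'] + \eta\,\d[Y_0;Y']
\]
over all pairs of independent $\F_2^n$-valued variables $(X',Y')$. Since $\tau[X_0;Y_0] = k$ and all three terms are nonnegative, a minimizer — or, to sidestep non-compactness, a sufficiently good near-minimizer obtained by a limiting argument — has every term $O(k)$. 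The goal is to show that at a minimizer the leading term must vanish, $\d[X;Y]=0$. A known characterization of zero Ruzsa distance then forces $X$ and $Y$ to be uniform on cosets of one \emph{common} subspace $H$, and the triangle inequality for Ruzsa distance converts $\d[X_0;X]\le \tau[X_0;Y_0]/\eta = k/\eta$ (and the analogous bound for $Y_0$) into $\d[X_0;\U_H],\,\d[Y_0;\U_H]=O_\eta(k)$.

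To show $\d[X;Y]=0$ at a minimizer, suppose $k>0$ and let $X_1,X_2,Y_1,Y_2$ be independent copies. I would run the four moves from the introduction — first and second sumset, first and second fiber — each of which outputs a new pair of independent variables. Minimality of $(X,Y)$ says none of these moves decreases $\tau$, and this rigidity pins down the conditional entropies and mutual-information terms that appear on expanding $\s$ through the fibring identity~\eqref{eq:FibringIdentity}. Concretely, apply the fibring identity to the pair $(X_1+Y_2,\ X_2+Y_1)$ with the projection onto the first summands: this relates $\s[X_1+Y_2;X_2+Y_1]$ to the conditional doubling of the fibre variables $X_u=(X_1\mid X_1+Y_2=u)$ and $Y_w=(Y_1\mid Y_1+X_2=w)$ minus a mutual information. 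Over $\F_2$ one also has the extra symmetry that negation is trivial, so the roles of the ``$X+Y$'' and ``$X+X$'' sums can be swapped; combining the four minimality constraints with this characteristic-$2$ identity — the step they call the endgame — produces an inequality of the shape $k\le C\eta k + (\text{lower order})$, a contradiction once $\eta$ is a small enough absolute constant. Hence $k=0$.

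The final bookkeeping is then short: $\d[X;Y]=0$ gives $X,Y$ uniform on cosets of a common subspace $H$, so $\d[X_0;\U_H]\le \d[X_0;X]+\d[X;\U_H]=\d[X_0;X]\le k/\eta$, and likewise for $Y_0$; thus $V=H$ works with constant $1/\eta$. To sharpen $1/\eta$ to the stated $6$ one reruns the whole argument on tensor powers $X^{\otimes m},Y^{\otimes m}$ — where Ruzsa distances scale exactly by $m$ while the combinatorial rounding losses in the endgame become negligible — and optimizes $\eta$ at the end. The main obstacle is the endgame: forcing the fibring identity, the four minimality inequalities, and the $\F_2$ identity to fit together tightly enough to conclude $k=0$. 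This is exactly the hard technical core of the Gowers--Green--Manners--Tao proof, and it is the analogue of this step — complicated by the new fibre difficulty described in the introduction — that the present paper must redo in the moderate-doubling regime.
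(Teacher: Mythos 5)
A point of orientation first: the paper does not prove Theorem~\ref{thm:PFR} at all --- it is imported as a black box from Gowers--Green--Manners--Tao (their Theorem~1.8), and the rest of the paper is built on top of it. So your proposal is being measured against the cited source rather than against an in-paper argument. Your outline of that source is architecturally faithful: the functional $\tau[X';Y'] = \d[X';Y'] + \eta\,\d[X_0;X'] + \eta\,\d[Y_0;Y']$, the existence of a minimizer (which, incidentally, needs no limiting argument: the pairs range over distributions on the fixed finite set $G\times G$, which is compact, and $\tau$ is continuous there), the four moves, the characterization of $\d=0$, and the final triangle-inequality bookkeeping are all exactly as in GGMT.

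The gap is that essentially all of the content of the theorem sits in the one step you leave as a black box, the endgame, and the shape you ascribe to it --- ``an inequality $k \le C\eta k + (\text{lower order})$, a contradiction'' --- is not how that argument closes. What minimality of $\tau$ under the four moves gives, after the two fibring identities, is that the conditional mutual informations $\I[Z_i : Z_j \mid S]$ are $O(\eta k)$ and the conditional entropies $\H[Z_i\mid S]$ nearly agree (this is precisely the paper's Lemma~\ref{lem:Endgame} with $\kappa = O(\eta k)$). One does not get a self-improving inequality in $k$ from this; GGMT must still invoke entropic Balog--Szemer\'edi--Gowers together with the characteristic-$2$ relation $Z_1+Z_2+Z_3=0$ to manufacture an explicit \emph{competitor pair} built from conditioned variables of the form $(Z_i \mid Z_j = t,\, S = r)$, show its Ruzsa distance is $O(\eta k)$ while its two distance-penalty terms $\eta\,\d[X_0;\cdot]$, $\eta\,\d[Y_0;\cdot]$ are controlled, and then derive the contradiction by testing $\tau$-minimality against that pair unless $k=0$. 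Constructing this pair and controlling the penalty terms is the genuinely hard core, and the proposal gives no indication of how to do it. Two smaller inaccuracies: the tensor-power step at the end is not part of GGMT's route to the constant (Ruzsa distance is additive under independent products, so there are no rounding losses to amortize), and the stated constant comes from the explicit choice of $\eta$ plus careful triangle inequalities, not from a concluding optimization.
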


\begin{corollary}\label{cor:KillingSubspacePFR}
Let $G = \F_2^n$ and suppose that $X, Y$ are $G$-valued random variables. There exists a subspace $V \subset G$ such that $\H[\U_V] \leq 7(\H[X] + \H[Y])$ and 
\[
\max\{\H[\pi_V(X)] , \H[\pi_V(Y)]\} \leq 12 \d[X ; Y]. 
\]
\end{corollary}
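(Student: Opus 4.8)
The plan is to apply Theorem~\ref{thm:PFR} to $X$ and $Y$ and then convert the two resulting Ruzsa-distance bounds into the desired statements about $\H[\U_V]$ and about $\H[\pi_V(X)], \H[\pi_V(Y)]$, using only elementary entropy inequalities. So let $V$ be the subspace furnished by Theorem~\ref{thm:PFR}, so that $\d[X;\U_V] \le 6\,\d[X;Y]$ and $\d[Y;\U_V] \le 6\,\d[X;Y]$; write $D := \d[X;Y]$ for brevity.

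For the projection bound, I would combine the identity $\H[\pi_V(X)] = \H[X+\U_V] - \H[\U_V]$ from \eqref{eq:EntropyOfProjection} with the definition $2\,\d[X;\U_V] = 2\H[X+\U_V] - \H[X] - \H[\U_V]$. Subtracting the former from the latter gives $2\,\d[X;\U_V] - \H[\pi_V(X)] = \H[X+\U_V] - \H[X]$, which is nonnegative since conditioning cannot increase entropy and $\H[X+\U_V \mid \U_V] = \H[X]$. Hence $\H[\pi_V(X)] \le 2\,\d[X;\U_V] \le 12D$, and the identical argument with $X$ replaced by $Y$ gives $\H[\pi_V(Y)] \le 12D$.

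For the dimension bound, I would again expand the Ruzsa distance: $6D \ge \d[X;\U_V] = \H[X+\U_V] - \frac{1}{2}\H[X] - \frac{1}{2}\H[\U_V]$, and now use $\H[X+\U_V] \ge \H[X+\U_V \mid X] = \H[\U_V]$ to obtain $6D \ge \frac{1}{2}\H[\U_V] - \frac{1}{2}\H[X]$, i.e.\ $\H[\U_V] \le 12D + \H[X]$. Plugging in the trivial upper bound $D = \d[X;Y] \le \frac{1}{2}\H[X] + \frac{1}{2}\H[Y]$ then yields $\H[\U_V] \le 7\H[X] + 6\H[Y] \le 7(\H[X]+\H[Y])$.

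I do not expect any real obstacle here: the whole argument is a short unwinding of the definitions of $\d$, $\pi_V$ and conditional entropy, together with the facts that conditioning decreases entropy and that adding an independent copy of $\U_V$ cannot decrease entropy. The only point needing a little care is matching constants — one should check that the crude estimate $\d[X;Y] \le \frac{1}{2}(\H[X]+\H[Y])$ already suffices to reach the clean value $7$, so no cleverer bookkeeping is required.
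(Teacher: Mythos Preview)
Your proof is correct and follows essentially the same route as the paper's own argument: apply Theorem~\ref{thm:PFR}, then use \eqref{eq:EntropyOfProjection} together with the two trivial lower bounds $\d[X;\U_V] \ge \tfrac{1}{2}|\H[X]-\H[\U_V]|$ (which you re-derive from conditioning-decreases-entropy) to extract the projection and dimension bounds. The only cosmetic difference is that the paper quotes these Ruzsa-distance lower bounds directly rather than rederiving them.
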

\begin{proof}
Let $V$ be the subspace provided by~\Cref{thm:PFR}. We have 
\begin{align*}
\d[X ; \U_V] &= \H[X + \U_V] - \frac{1}{2}\H[X] -\frac{1}{2} \H[\U_V] \\ 
&\stackrel{\eqref{eq:EntropyOfProjection}}{=} \H[\pi_V(X)] + \frac{1}{2}\H[\U_V] - \frac{1}{2}\H[X].
\end{align*}
Using the trivial lower bound on Ruzsa distance, 
\[
\H[\pi_V(X)] = \d[X ; \U_V] + \frac{1}{2}\H[X] -  \frac{1}{2}\H[\U_V] \leq 2 \d[X ; \U_V] \leq 12 \d[X ; Y]. 
\]
The other trivial lower bound, $\d[X ; \U_V] \geq \frac{1}{2}\H[\U_V] - \frac{1}{2}\H[X] $, gives
\[
\H[\U_V] \leq \H[X] + 2\d[X ; \U_V] \leq \H[X] +  12 \d[X ; Y] \leq 7(\H[X] + \H[Y]). 
\]
\end{proof}

\begin{lemma}[Entropic Balog--Szemer\'edi--Gowers \cite{GowersGreenMannersTao2025}*{Lemma A.2}]\label{lem:BSG}
Let $(A,B)$ be a $G^2$-valued random variable (note that $A$ and $B$ are not necessarily independent). Then
\[
\E_{t\sim A+B} \d[A | A+B = t; B | A+B = t] \leq 3 \I[A : B] + 2\H[A+B] - \H[A] - \H[B]. 
\]
\end{lemma}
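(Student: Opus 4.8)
The plan is to unfold the definition of Ruzsa distance, pass to a convenient coupling of two copies of $(A,B)$, and reduce the statement to a clean entropy inequality amenable to submodularity and the Ruzsa triangle inequality. Write $A_t := (A \mid A+B=t)$, $B_t := (B \mid A+B=t)$, so that $\d[A_t;B_t] = \H[A_t'+B_t'] - \tfrac12\H[A_t] - \tfrac12\H[B_t]$ with $A_t',B_t'$ independent copies. I would couple as follows: let $(A_1,B_1)$ have the law of $(A,B)$, set $T := A_1+B_1$, and let $(A_2,B_2)$ be drawn from the law of $(A,B)$ conditioned on $A+B=T$, conditionally independent of $(A_1,B_1)$ given $T$. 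Then $T\sim A+B$, and given $T$ the variables $A_1\sim A_T$ and $B_2\sim B_T$ are independent, so $\E_{t\sim A+B}\H[A_t'+B_t'] = \H[A_1+B_2 \mid T]$ while $\E_t\H[A_t] = \E_t\H[B_t] = \H[A \mid A+B] = \H[A,B]-\H[A+B]$. Substituting and using $\H[A,B]=\H[A]+\H[B]-\I[A:B]$, a short rearrangement shows the lemma is equivalent to the clean bound
\[
\H[A_1+B_2 \mid T]\ \le\ 2\,\I[A:B] + \H[A+B].
\]

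Next I would exploit the $\F_2$-identity $(A_1+B_2)+T = (A_1+B_2)+(A_1+B_1) = B_1+B_2$, which gives a bijection $(A_1+B_2,\,T)\leftrightarrow(T,\,B_1+B_2)$. Hence $\H[A_1+B_2 \mid T] = \H[A_1+B_2,T]-\H[T] = \H[T,B_1+B_2]-\H[T] \le \H[B_1+B_2]$, so it suffices to bound $\H[B_1+B_2]$ — the entropy of the self-sum of $B$ through coinciding fibers — by $2\,\I[A:B]+\H[A+B]$. To control it I would use two structural facts about the coupling: the pair $(T,B_2)$ has exactly the law of $(A+B,B)$, so $\H[T \mid B_2] = \H[A+B \mid B] = \H[A \mid B]$; and $B_1,B_2$ are conditionally independent given $T$ with $\H[B_1 \mid T]=\H[B \mid A+B]$. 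Feeding these into the chain rule, together with submodularity ($\I[B_1:B_2]\ge0$) applied so as not to pass through the full joint entropy, should collapse $\H[B_1+B_2]$ to the target. (An essentially equivalent route is the Ruzsa triangle inequality for $\d$ with fresh independent copies $A^{\ast}\sim A$, $B^{\ast}\sim B$ as intermediate points, $\d[A_t;B_t]\le \d[A_t;B^{\ast}]+\d[B^{\ast};A^{\ast}]+\d[A^{\ast};B_t]$, averaged over $t$: the middle term is $\d[A;B]$ and the outer terms are conditional sumset-entropies such as $\H[A+B^{\ast} \mid A+B]$, bounded via submodularity through the bijection $(A+B^{\ast},A+B)\leftrightarrow(A+B,B+B^{\ast})$ and the Ruzsa triangle inequality, which trades self-distances for $\d[A;B]$.)

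The step I expect to be the main obstacle is precisely this last collapse. Crude bookkeeping — e.g. $\H[B_1 \mid B_2]\le \H[T \mid B_2]+\H[B_1 \mid T]$ followed by expanding everything in $\H[A,B]$, or the naive three-term triangle bound — loses a genuine constant and does not close; indeed it already fails when $A$ and $B$ are independent and uniform over a subspace, the case in which the lemma is tight with value $0$, and also overcounts by a factor in the Sidon-type examples. What makes the collapse work is using more of the $\F_2$-structure, essentially the embedding of fiber sumsets $A_t+B_t$ into $A+B$: the set-theoretic statement is immediate, but promoting it to an entropy inequality sharp enough to produce exactly the constants $3\,\I[A:B]+2\H[A+B]-\H[A]-\H[B]$ is the delicate point of the argument, and is likely where the proof invokes the sharper conditional-Ruzsa-distance estimates of Gowers–Green–Manners–Tao rather than the bare triangle inequality.
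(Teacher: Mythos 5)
First, note that the paper does not prove this lemma: it is quoted directly from Gowers--Green--Manners--Tao (their Lemma A.2), so there is no internal argument to compare yours against. On its own terms, your first paragraph is correct and matches the standard opening of their proof: taking $(A_1,B_1)$ and $(A_2,B_2)$ to be conditionally independent trials of $(A,B)$ relative to $T=A+B$, the left-hand side equals $\H[A_1+B_2\mid T]-\H[A,B]+\H[A+B]$, and the lemma is equivalent to
\[
\H[A_1+B_2\mid T]\;\le\; 2\,\I[A:B]+\H[A+B].
\]

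What follows, however, is not a proof, and you essentially say so yourself. Two issues. (i) The passage from $\H[A_1+B_2\mid T]=\H[B_1+B_2\mid T]$ to $\H[B_1+B_2]$ discards $\I[B_1+B_2:T]\ge 0$; the resulting target $\H[B_1+B_2]\le 2\,\I[A:B]+\H[A+B]$ is \emph{stronger} than what the lemma itself implies about $\H[B_1+B_2]$ (the lemma only gives the bound with $2\H[A+B]$ in place of $\H[A+B]$), so you may have replaced the goal by something harder, and in any case you offer no proof of it. (ii) The decisive inequality is exactly the one you defer: you concede that chain rule plus submodularity ``loses a genuine constant and does not close'' --- correctly, since plain submodularity applied to the joint law of $(A_1,B_1,A_2,B_2)$ only yields $\H[A_1+B_2\mid T]\le \H[A\mid A+B]+\H[B\mid A+B]$, which is off by a factor of two in the model case of $A,B$ independent and uniform on a subspace, where the lemma holds with equality --- and you then say the collapse ``is likely where the proof invokes the sharper conditional-Ruzsa-distance estimates of Gowers--Green--Manners--Tao.'' That is circular: those estimates (their Lemma A.1 together with the entropic Ruzsa triangle inequality, which is where the additive structure genuinely enters) are the content of the lemma you are asked to prove. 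As it stands, the proposal reduces the statement to an equivalent or harder one and stops at the only nontrivial step.
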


\section{Endgame estimates}\label{sec:endgame}
Let $X$ and $Y$ be independent $G$-valued random variables. We let $X_j, Y_j$ be independent copies of $X, Y$. 
We apply the fibring identity, Eq.~\eqref{eq:FibringIdentity}, to 
\begin{align}
    A = X_1 \times X_2 \subset G\times G, \quad B = Y_1 \times Y_2 \subset G\times G\label{eq:FirstPairForFibring}
\end{align}
and to 
\begin{align}
    A = X_1 \times Y_2 \subset G\times G,\quad B = Y_1 \times X_2 \subset G\times G.\label{eq:SecondPairForFibring}
\end{align}
In both of these cases, $s[A : B] = 2s[X : Y]$.
Let $S = X_1+X_2+Y_1+Y_2$. 
Applying~\eqref{eq:FibringIdentity} to~\eqref{eq:FirstPairForFibring} gives
\begin{align*}
    \s[X_1+X_2 ; Y_1 + Y_2] + \s[(X_1, X_2) | X_1+X_2; (Y_1, Y_2) | Y_1+Y_2] &= 2\s[X : Y] + \\
    &\qquad \I[(X_1+Y_1,X_2+Y_2) : (X_1+X_2, Y_1 + Y_2) | S].
\end{align*}
We can rewrite the second term on the left-hand side as 
\[
\s[(X_1, X_2) | X_1+X_2; (Y_1, Y_2) | Y_1+Y_2] = s[X_1 | X_1+X_2; Y_1 | Y_1+Y_2]
\]
leading to 
\begin{equation}\label{eq:FirstFibring}
\s[X_1+X_2 ; Y_1 + Y_2] + \s[X_1 | X_1+X_2; Y_1 | Y_1+Y_2] = 2\s[X : Y] + \I[(X_1+Y_1,X_2+Y_2) : (X_1+X_2, Y_1 + Y_2) | S].
\end{equation}
Similarly, applying~\eqref{eq:FibringIdentity} to~\eqref{eq:SecondPairForFibring} gives
\begin{equation}\label{eq:SecondFibring}
\s[X_1+Y_2 ; X_2+Y_1] + \s[X_1 | X_1+Y_2; Y_1 | Y_1+X_2] = 2\s[X ; Y] + \I[(X_1+Y_1, X_2+Y_2) : (X_1+Y_2 : X_2 + Y_1) | S].
\end{equation}
Let 
\begin{equation}\label{eq:Zdefn}
Z_1 := X_1 + Y_1,\quad Z_2 := X_2+Y_1,\quad Z_3 := X_1+X_2,\quad S := X_1+X_2+Y_1+Y_2.
\end{equation}
Because we are working in $\F_2$, $Z_1+Z_2+Z_3 = 0$. The mutual informations above can be expressed in terms of $Z_1,Z_2,Z_3$, and $S$,
\begin{align}
    \I[(X_1 + Y_1, X_2 + Y_2) : (X_1+X_2, Y_1+Y_2) | S] &= \I[Z_1 : Z_3 | S], \label{eq:FirstMutualInf} \\ 
    \I[(X_1+Y_1, X_2+Y_2) : (X_1+Y_2 : X_2 + Y_1) | S] &= \I[Z_1 : Z_2 | S].\label{eq:SecondMutualInf}
\end{align}
The following lemma roughly says that either
\begin{itemize}
    \item One of the four pairs of random variables 
    \[
    (X_1+X_2, Y_1+Y_2),\; (X_1+Y_2, Y_1+X_2),\; (X_1 | X_1 + X_2, Y_1 | Y_1+Y_2),\;\text{or}\; (X_1 | X_1+Y_2, Y_1 | Y_1+X_2)
    \]
    has a larger doubling mass than $(X, Y)$, or 
    \item The fibers $X_u = (X_1 | X_1+Y_2 = u)$ and $Y_w = (Y_1 | Y_1+X_2 = w)$ are typically contained in a small number of cosets. 
\end{itemize}
\begin{lemma}[Endgame Lemma]\label{lem:Endgame}
Let $\eta \in (0, 1/2]$ and $\kappa > 0$. Let $X, Y$ be independent $G$-valued random variables such that 
\[
s[X ; Y] \geq \eta (\H[X] + \H[Y]).
\]
Let $X_1, X_2$ and $Y_1, Y_2$ denote independent copies of $X, Y$ respectively, and suppose the following inequalities hold,
\begin{itemize}
    \item $s[X_1+X_2; Y_1+Y_2] \leq \eta(\H[X_1+X_2] + \H[Y_1+Y_2]) + \kappa$,
    \item $s[X_1+Y_2; X_2+Y_1] \leq \eta(\H[X_1+Y_2] + \H[X_2+Y_1]) + \kappa$,
    \item $s[X_1 | X_1+X_2 ; Y_1 | Y_1+Y_2] \leq \eta (\H[X_1 | X_1+X_2] + \H[Y_1 | Y_1+Y_2]) + \kappa$, 
    \item $s[X_1 | X_1+Y_2; Y_1 | Y_1+X_2] \leq \eta (\H[X_1 | X_1+Y_2] + \H[Y_1 | Y_1+X_2]) + \kappa$.
\end{itemize}
For  $(u, w) \in \supp (X_1+Y_2) \times \supp(Y_1+X_2)$, let 
\[
X_u = (X_1 | X_1+Y_2 = u)\quad\text{and}\quad Y_w = (Y_1 | Y_1+X_2 = w).
\]
For each $(u,w)$ there is a subspace $V(u, w) \subset G$ such that $\H[\U_{V(u,w)}] \leq 7(\H[X_u] + \H[Y_u])$ and 
\[
\E_{u \sim X_1+Y_2, v \sim Y_1+X_2} (\H[\pi_{V(u,w)}(X_u)] + \H[\pi_{V(u,w)}(Y_w)]) \leq 480 \kappa.
\]
\end{lemma}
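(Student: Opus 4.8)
The plan is to produce $V(u,w)$ for each pair $(u,w)$ by applying Corollary~\ref{cor:KillingSubspacePFR} to $(X_u,Y_w)$: this gives $\H[\U_{V(u,w)}]\le 7(\H[X_u]+\H[Y_w])$ and $\H[\pi_{V(u,w)}(X_u)]+\H[\pi_{V(u,w)}(Y_w)]\le 24\,\d[X_u;Y_w]$. Taking expectations over $u\sim X_1+Y_2$ and $w\sim Y_1+X_2$, the lemma reduces to the single estimate $\E_{u,w}\d[X_u;Y_w]\le 20\kappa$, and the rest of the work goes into this.

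Next I would rewrite this average entropically. Since $X_1+Y_2$ is a function of $(X_1,Y_2)$ and $Y_1+X_2$ a function of $(Y_1,X_2)$, the two are independent, and conditioning the whole system on $(X_1+Y_2,Y_1+X_2)=(u,w)$ leaves $X_1\sim X_u$ and $Y_1\sim Y_w$ independent; hence $\E_{u,w}\H[X_u+Y_w]=\H[X_1+Y_1\mid X_1+Y_2,\,Y_1+X_2]$. In the notation of \eqref{eq:Zdefn} one has $X_1+Y_2=Z_2+S$, so $(X_1+Y_2,Y_1+X_2)$ and $(Z_2,S)$ are informationally equivalent, and together with $\E_u\H[X_u]=\H[X_1\mid X_1+Y_2]=\s[X;Y]=\E_w\H[Y_w]$ this yields
\[
\E_{u,w}\d[X_u;Y_w]=\H[Z_1\mid Z_2,S]-\s[X;Y]=\s[X;Y]-\s[X_1\mid X_1+Y_2;Y_1\mid Y_1+X_2],
\]
the second equality coming from \eqref{eq:SecondFibring} after evaluating the conditional entropies $\H[Z_1\mid S]$, $\H[Z_2\mid S]$ and $\H[Z_1,Z_2\mid S]$.

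I would then feed the four hypotheses into the fibring identities. Using $\H[X_1+X_2]+\H[X_1\mid X_1+X_2]=2\H[X]$, $\H[X_1+Y_2]=\H[X+Y]$ and $\H[X_1\mid X_1+Y_2]=\s[X;Y]$ (with the $Y$-analogues), hypotheses (1),(3) inserted into \eqref{eq:FirstFibring}--\eqref{eq:FirstMutualInf} give $\I[Z_1:Z_3\mid S]\le 2\kappa$, and hypotheses (2),(4) inserted into \eqref{eq:SecondFibring}--\eqref{eq:SecondMutualInf} give $\I[Z_1:Z_2\mid S]\le 2\kappa$. I would also record the special $\F_2$ identity: since $Z_1+Z_2+Z_3=0$, conditioning on $S$ and any two of the $Z_i$ determines the third, so $\H[Z_i,Z_j\mid S]$ does not depend on $\{i,j\}$; combined with $\H[Z_1\mid S]=\H[Z_2\mid S]=2\H[X+Y]-\H[S]$ and $\H[Z_3\mid S]=\H[X_1+X_2]+\H[Y_1+Y_2]-\H[S]$ this gives $\I[Z_1:Z_3\mid S]-\I[Z_1:Z_2\mid S]=2\s[X;Y]-\s[X;X]-\s[Y;Y]$, and hence, via \eqref{eq:FirstFibring} and \eqref{eq:SecondFibring}, that the two fibre doubling masses coincide: $\s[X_1\mid X_1+X_2;Y_1\mid Y_1+Y_2]=\s[X_1\mid X_1+Y_2;Y_1\mid Y_1+X_2]$. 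Writing $\sigma_{X+Y}:=\s[X+Y;X+Y]=2\H[X+Y]-\H[S]$, the average above becomes $\E_{u,w}\d[X_u;Y_w]=\sigma_{X+Y}-\I[Z_1:Z_2\mid S]-\s[X;Y]$.

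The last step, which I expect to be the real obstacle, is to show $\sigma_{X+Y}\le \s[X;Y]+O(\kappa)$, so that the displayed expression is $O(\kappa)$. The mutual-information bounds above only used the \emph{sums} of hypotheses (1),(3) and of (2),(4); the point now is to use hypotheses (3),(4) on their own — e.g.\ $\s[X_1\mid X_1+Y_2;Y_1\mid Y_1+X_2]\le 2\eta\,\s[X;Y]+\kappa$ — and to combine this with \eqref{eq:SecondFibring} and with hypothesis (2) (which upper-bounds $\sigma_{X+Y}$) so that the resulting two-sided control traps $\sigma_{X+Y}$ in an $O(\kappa)$-band around $\s[X;Y]$; the $(1-2\eta)\,\s[X;Y]$-type discrepancies appearing along the way must be absorbed precisely by the slack the fibre hypotheses supply, possibly with an additional use of the Ruzsa triangle inequality or of Lemma~\ref{lem:BSG}. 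Collecting the $\kappa$-losses from the four hypotheses across the two fibrings and multiplying by the factor $24$ from Corollary~\ref{cor:KillingSubspacePFR} produces the constant $480$; making every non-$\kappa$ term cancel in this bookkeeping is the delicate part of the argument.
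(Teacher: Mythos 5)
Your opening reduction is where the argument breaks, and it cannot be repaired: the bound $\E_{u,w}\d[X_u;Y_w]\le 20\kappa$ that you reduce to is not merely hard to prove, it is \emph{contradicted} by hypothesis (4) whenever $\eta<1/2$. Indeed, your own identity $\E_{u,w}\d[X_u;Y_w]=\s[X;Y]-\s[X_1\mid X_1+Y_2;Y_1\mid Y_1+X_2]$ is correct, and hypothesis (4) together with \eqref{eq:DoublingMassFiberSize} gives $\s[X_1\mid X_1+Y_2;Y_1\mid Y_1+X_2]\le 2\eta\,\s[X;Y]+\kappa$, hence
\[
\E_{u,w}\d[X_u;Y_w]\;\ge\;(1-2\eta)\,\s[X;Y]-\kappa\;\ge\;(1-2\eta)\,\eta\,(\H[X]+\H[Y])-\kappa,
\]
which in the regime where the lemma is applied ($\kappa$ of order $\varepsilon_0(\H[X]+\H[Y])$ with $\varepsilon_0\ll\eta^2$) is far larger than $20\kappa$. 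Equivalently, your hoped-for estimate $\sigma_{X+Y}\le\s[X;Y]+O(\kappa)$ is false: the same computation forces $\sigma_{X+Y}\ge(2-2\eta)\s[X;Y]-\kappa$. Conceptually, the fibers $X_u$ and $Y_w$ need not be close to each other in Ruzsa distance (they could, say, both be Bernoulli-type distributions supported in a common subspace $V$, with $\d[X_u;Y_w]$ comparable to their entropies while $\H[\pi_V(X_u)]=\H[\pi_V(Y_w)]=0$), so applying Corollary~\ref{cor:KillingSubspacePFR} directly to the pair $(X_u,Y_w)$ demands much more than the lemma's conclusion requires.

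The correct route — and the actual content of the endgame — is to control the Ruzsa distance between the \emph{sum} $X_u+Y_w$ and a third variable, namely the corresponding fiber of $X_1+X_2$. One applies Lemma~\ref{lem:BSG} to the (dependent) pair $(Z_1,Z_3)$ conditioned on $S$, using $Z_1+Z_3=Z_2$ over $\F_2$; the resulting bound $3\I[Z_1:Z_3\mid S]+2\H[Z_2\mid S]-\H[Z_1\mid S]-\H[Z_3\mid S]\le 20\kappa$ uses exactly the mutual-information estimates and the near-equality of the $\H[Z_i\mid S]$ that you already derived. Corollary~\ref{cor:KillingSubspacePFR} applied to that pair produces $V(u,w)$ with $\H[\pi_{V(u,w)}(X_u+Y_w)]\le 12\,\d[\cdot;\cdot]$, and since $X_u,Y_w$ are independent, $\H[\pi_{V(u,w)}(X_u)]+\H[\pi_{V(u,w)}(Y_w)]\le 2\H[\pi_{V(u,w)}(X_u+Y_w)]$, which gives the factor $24$ and the constant $480$. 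So your intermediate bookkeeping (the bounds $\I[Z_1:Z_2\mid S],\I[Z_1:Z_3\mid S]\le 2\kappa$ and the various entropy identities) is sound and overlaps with the paper's proof, but the object to which PFR must be applied is $X_u+Y_w$ paired with a fiber of $X_1+X_2$, not $X_u$ paired with $Y_w$.
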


\begin{proof}
Let $Z_1,Z_2,Z_3,S$ be as in~\eqref{eq:Zdefn}.
First we sum the left-hand sides of the equations in the Lemma statement using~\eqref{eq:FirstFibring} and~\eqref{eq:SecondFibring},
\begin{align}
\text{Sum of left-hand sides} &= 4s[X ; Y]+ \I[Z_1 : Z_3 | S] + \I[Z_1 : Z_2 | S] \nonumber \\ 
&\geq 4\eta(\H[X] + \H[Y])+ \I[Z_1 : Z_3 | S] + \I[Z_1 : Z_2 | S].\nonumber
\end{align}
For the right-hand sides, we pair together terms using
\begin{align*}
\H[X_1 | X_1 + X_2] + \H[X_1 + X_2] &= \H[(X_1, X_2) | X_1+X_2] + \H[X_1+X_2] \\ 
&= \H[X_1,X_2] = 2\H[X],
\end{align*}
and similar inequalities, to get
\begin{align*}
    \text{Sum of right-hand sides} &= 4\eta( \H[X] + \H[Y]) + 4\kappa. 
\end{align*}
When we compare both sides, the $4\eta (\H[X] + \H[Y])$ terms cancel to give
\[
\I[Z_1 : Z_3 | S] + \I[Z_1 : Z_2 | S]  \leq 4\kappa.
\]
Because $X_1$ and $X_2$ play symmetric roles, $\I[Z_1 : Z_3 | S] = \I[Z_2 : Z_3 | S]$, thus 
\[
\I[Z_i : Z_j | S] \leq 4\kappa \qquad \text{for all $i \neq j$.}
\]
Using mutual information, we can compare the conditional entropy of $Z_1$ and $Z_3$,
\[
\H[Z_3 | S] = \H[Z_1+Z_2 | S] \geq \H[Z_1 | S, Z_2] \stackrel{\eqref{eq:MutualInformationConditionalEntropy}}{=} \H[Z_1 | S] - \I[Z_1 : Z_2 | S] \geq \H[Z_1 | S] - 4\kappa. 
\]
The same inequality holds for any pair, so 
\[
\big|\, \H[Z_i | S] - \H[Z_j | S]\, \big| \leq 4\kappa\qquad \text{for $i,j \in \{1,2,3\}$.}
\] 
Using~\Cref{lem:BSG} and the fact that $Z_1+Z_2 = Z_3$,
\begin{equation}\label{eq:ExpectedDistance}
\begin{split}
\E_{(t,r ) \sim (Z_2, S)} \d[(Z_1 | Z_2 = t, S = r); (Z_3 | Z_2 = t, S = r)] &\leq 3 \I[Z_1 : Z_3 | S] + 2\H[Z_2 | S] - \H[Z_1 | S] - \H[Z_3 | S] \\
&\leq 20\kappa.    
\end{split}
\end{equation}
Unpacking the definition of $Z_1,Z_2,Z_3,S$ lets us write this equation in terms of $X_1,X_2,Y_1,Y_2$,
\begin{align*}
\d[(Z_1 | Z_2 = t, S = r); (Z_3 | Z_2 = t, S = r)]  = \d[(X_1+Y_1 | X_2+Y_1 = t, X_1+Y_2 = r - t); \\ 
(X_1+X_2 | X_2+Y_1 = t, X_1+Y_2 = r-t)].
\end{align*}
Let 
\[
U = X_1+Y_2,\; W = Y_1+X_2,\qquad X_u = (X_1 | U = u),\; Y_w = (Y_1 | W = w). 
\]
The first random variable in the Ruzsa distance above has the same distribution as $X_{r-t} + Y_t$. 
By~\Cref{cor:KillingSubspacePFR}, for each $(u, w) \in \supp U \times \supp W$, there is a subspace $V(u,w) \subset G$ with $\H[\U_{V(u,w)}] \leq 7(\H[X_u] + \H[Y_w])$ such that 
\begin{align*}
\H[\pi_{V(u,w)}(X_u + Y_w)] &\leq 12\d[X_u + Y_w; (X_1+X_2 | X_2+Y_1 = w, X_1+Y_2 = u)].
\end{align*}
We use the estimate
\[
\H[\pi_{V(u,w)}(X_u)] + \H[\pi_{V(u,w)}(Y_w)]  \leq 2\H[\pi_{V(u,w)}(X_u + Y_w)].
\]
Under the change of variable $(t, r) \to (r-t, t)$ the expected value $\E_{(t,r ) \sim (Z_2, S)}$ is the same as $\E_{(u, w) \sim (U, W)}$, so~\eqref{eq:ExpectedDistance} gives
\[
\E_{u \sim U, w \sim W}\bigl( \H[\pi_{V(u,w)}(X_u)] + \H[\pi_{V(u,w)}(Y_w)]\bigr) \leq 480\, \kappa.
\]
\end{proof}

\section{Proof setup}\label{sec:setup-induction}

The proof of Theorem \ref{thm:main-entropy} proceeds by induction on the doubling mass of $X$ and $Y$. Below we define two intermediate statements which we will induct on. 

\begin{definition}\leavevmode
\begin{itemize}
    \item For $\eta \in (0, 1/2]$, $L\geq 0$, and $c \in (0,1)$, let $\mb A(\eta, L , c)$ denote the following assertion.
    For any independent random variables $X, Y$ on $G$ such that $\H[X+Y] \leq (1 - \eta) (\H[X]+\H[Y])$, there is a subspace $V$ with $\H[\U_V] \leq L (\H[X]+\H[Y])$ and 
    \[
    \H[\pi_{V}(X)] + \H[\pi_{V}(Y)] \leq (1 - c) (\H[X] + \H[Y]). 
    \]

    \item For $\eta \in (0, 1/2]$, $\varepsilon \in (0,1]$, and $L \geq 0$, let $\mb B(\eta, \varepsilon, L)$ denote the following assertion.
    For any independent random variables $X, Y$ on $G$, there is a subspace $V$ with $\H[\U_V] \leq L (\H[X] + \H[Y])$ and 
    \[
    \H[\pi_{V}(X) + \pi_{V}(Y)] \geq (1 - \eta) (\H[\pi_{V}(X)] + \H[\pi_{V}(Y)]) - \varepsilon (\H[X] + \H[Y]). 
    \]
\end{itemize}
\end{definition}

Our goal is to show that for all $\eta \in (0, 1/2]$ and $\varepsilon > 0$, $\mb B(\eta, \varepsilon, L)$ holds for some $L$. 

\begin{lemma}\label{lem:BasicRelationsAB}
\leavevmode
\begin{enumerate}[label=(\alph*)]
    \item If $\eta' > \eta + \varepsilon$, then $\mb B(\eta, \varepsilon, L) \Longrightarrow \mb A(\eta', L, \eta' - \eta - \varepsilon)$.
    \item If $\varepsilon \in (0,1]$, then $\mb A(\eta, L, c) \Longrightarrow \mb B(\eta, \varepsilon, \lceil \frac{\log \varepsilon^{-1}}{c} \rceil L)$.
\end{enumerate}
\end{lemma}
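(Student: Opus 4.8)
The plan is to prove the two implications separately. Implication~(a) is a direct application of $\mb B$ together with the fact that entropy cannot increase under a deterministic map, followed by arithmetic; implication~(b) is an iteration of $\mb A$, peeling off one subspace at a time and shrinking the projected entropy geometrically.

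\emph{Part (a).} Assume $\mb B(\eta,\varepsilon,L)$ and let $X,Y$ be independent on $G$ with $\H[X+Y]\le(1-\eta')(\H[X]+\H[Y])$; write $H:=\H[X]+\H[Y]$. Applying $\mb B(\eta,\varepsilon,L)$ produces a subspace $V$ with $\H[\U_V]\le LH$ and $\H[\pi_V(X)+\pi_V(Y)]\ge(1-\eta)(\H[\pi_V(X)]+\H[\pi_V(Y)])-\varepsilon H$. Since $\pi_V$ is a homomorphism, $\pi_V(X)+\pi_V(Y)=\pi_V(X+Y)$, and since entropy does not increase under a deterministic map, $\H[\pi_V(X+Y)]\le\H[X+Y]\le(1-\eta')H$. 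Chaining these two bounds and dividing by $1-\eta>0$ gives $\H[\pi_V(X)]+\H[\pi_V(Y)]\le\frac{1-\eta'+\varepsilon}{1-\eta}\,H$. It then remains to verify the elementary inequality $\frac{1-\eta'+\varepsilon}{1-\eta}\le 1-(\eta'-\eta-\varepsilon)$; clearing the positive denominator, this is equivalent to $\eta(\eta'-\eta-\varepsilon)\ge0$, which holds since $\eta'>\eta+\varepsilon$. This is exactly the conclusion of $\mb A(\eta',L,\eta'-\eta-\varepsilon)$ (and $\eta'-\eta-\varepsilon\in(0,1)$ because $\eta'\le1/2$).

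\emph{Part (b).} Assume $\mb A(\eta,L,c)$ and fix $\varepsilon\in(0,1]$; set $k:=\lceil\frac{\log\varepsilon^{-1}}{c}\rceil$. Given independent $X,Y$ on $G$, write $H:=\H[X]+\H[Y]$ and build an increasing chain of subspaces $\{0\}=V_0\subseteq V_1\subseteq\cdots$ together with $H_i:=\H[\pi_{V_i}(X)]+\H[\pi_{V_i}(Y)]$, noting $H_0=H$ and $H_i\le H$ for all $i$. At stage $i$: if $\H[\pi_{V_i}(X)+\pi_{V_i}(Y)]\ge(1-\eta)H_i-\varepsilon H$, stop and output $V:=V_i$. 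Otherwise, dropping the nonnegative term $\varepsilon H$ shows $\H[\pi_{V_i}(X)+\pi_{V_i}(Y)]<(1-\eta)H_i$, so $\mb A(\eta,L,c)$ applies to the independent pair $\pi_{V_i}(X),\pi_{V_i}(Y)$ in the group $G/V_i\cong\F_2^{\,n-\dim V_i}$, yielding $\bar W\subseteq G/V_i$ with $\H[\U_{\bar W}]\le LH_i\le LH$ and $\H[\pi_{\bar W}(\pi_{V_i}(X))]+\H[\pi_{\bar W}(\pi_{V_i}(Y))]\le(1-c)H_i$. Put $V_{i+1}:=\pi_{V_i}^{-1}(\bar W)$: by the third isomorphism theorem $\pi_{V_{i+1}}=\pi_{\bar W}\circ\pi_{V_i}$ and $\H[\U_{V_{i+1}}]=\H[\U_{V_i}]+\H[\U_{\bar W}]$, so $H_{i+1}\le(1-c)H_i$. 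By induction $H_i\le(1-c)^iH\le e^{-ci}H$, hence $H_k\le\varepsilon H$ by the choice of $k$; when $H_i\le\varepsilon H$ we have $(1-\eta)H_i-\varepsilon H\le H_i-\varepsilon H\le0\le\H[\pi_{V_i}(X)+\pi_{V_i}(Y)]$, so the process terminates at some stage $i\le k$. The output $V=V_i$ then satisfies $\H[\U_V]=\sum_{j<i}\H[\U_{\bar W_j}]\le iLH\le kLH$ by telescoping, and the stopping condition is precisely the conclusion of $\mb B(\eta,\varepsilon,kL)$.

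I expect the only point requiring care to be the bookkeeping in~(b): one must iterate the quotient construction cleanly---using the third isomorphism theorem to identify $\pi_{V_{i+1}}$ with $\pi_{\bar W}\circ\pi_{V_i}$ and to add up the dimensions of the successive $\bar W_j$---and one must check that once $H_i\le\varepsilon H$ the stopping condition becomes automatic, which is what caps the number of iterations (and hence the final codimension) at $k=\lceil\log\varepsilon^{-1}/c\rceil$. Part~(a) and the remaining estimates in~(b) are immediate from the definitions and the geometric decay of the $H_i$.
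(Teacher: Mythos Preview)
Your proof is correct and follows essentially the same approach as the paper. Part~(a) is the same computation, just with the arithmetic organized slightly differently (you divide through by $1-\eta$ and verify an elementary inequality, whereas the paper introduces $\delta$ and rearranges); part~(b) is the same iterative application of $\mb A$ with geometric decay of the projected entropy, the only cosmetic difference being that you take the conclusion of $\mb B$ itself as your stopping condition while the paper uses the marginally stronger condition $\H[X_i+Y_i]>(1-\eta-\varepsilon)(\H[X_i]+\H[Y_i])$.
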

\begin{proof}
\textbf{(a)}
Suppose $\H[X+Y] \leq (1 - \eta') (\H[X] + \H[Y])$. 
Let $V$ be a subspace with $\H[\U_V] \leq L (\H[X]+\H[Y])$ and 
\[
\H[\pi_{V}(X) + \pi_{V}(Y)] \geq (1-\eta) (\H[\pi_{V}(X)] + \H[\pi_{V}(Y)]) - \varepsilon(\H[X] + \H[Y]).
\]
Let $\delta > 0$ represent the amount of entropy killed under projection,
\[
\H[\pi_{V}(X) ] + \H[\pi_{V}(Y)] = (1 - \delta)(\H[X] + \H[Y]),
\]
so that
\[
\H[\pi_{V}(X) + \pi_{V}(Y)] \geq ((1 - \eta)(1-\delta) - \varepsilon) (\H[X] + \H[Y]). 
\]
The trivial upper bound 
\[
\H[\pi_{V}(X) + \pi_{V}(Y)] \leq \H[X+Y] \leq (1 - \eta')(\H[X]+\H[Y])
\]
implies 
\[
(1 - \eta)(1-\delta) - \varepsilon  \leq 1 - \eta',
\]
and rearranging yields 
\[
\delta \geq \frac{\eta' - \eta - \varepsilon}{1 - \eta} \geq \eta' - \eta - \varepsilon.
\]

\bigskip \noindent 
\textbf{(b)}
Fix $\varepsilon > 0$. 
Let $X, Y$ be independent random variables on $G$. Define 
\[
G_0 = G,\quad X_0 = X,\quad Y_0 = Y,
\]
and construct a sequence $(X_i, Y_i)$ as follows. If $\H[X_i+Y_i] \leq (1 - \eta - \varepsilon) (\H[X_i] + \H[Y_i])$ then apply $\mb A(\eta, L, c)$ to produce a subspace $V_i$. Let 
\[
G_{i+1} = G_i / V_i,\quad X_{i+1} = \pi_{G_i/V_i}(X_i),\quad Y_{i+1} = \pi_{G_i/V_i}(Y_i)
\]
so that 
\[
\H[X_{i+1}] + \H[Y_{i+1}] \leq (1 - c) (\H[X_i] + \H[Y_i]). 
\]
After $j$ steps, we have quotiented $G_0$ by a subspace of dimension $\leq j L$. 
Let $m = \lceil \frac{\log \varepsilon^{-1}}{c}\rceil$, so that  $(1 - c)^m \leq \varepsilon$. If $X_m$ and $Y_m$ are defined, meaning the process did not terminate already, then 
\[
\H[X_m] + \H[Y_m] \leq \varepsilon (\H[X] + \H[Y])
\]
which implies 
\[
\H[X_m + Y_m] \geq (1 - \eta) (\H[X_m] + \H[Y_m]) - \varepsilon(\H[X] + \H[Y])
\]
so the conclusion $\mb B(\eta, \varepsilon, mL)$ holds. 

If the process terminates at step $i < m$, that means 
\[
\H[X_i+Y_i] \geq ( 1- \eta - \varepsilon) (\H[X_i] + \H[Y_i]) \geq (1 - \eta)(\H[X_i] + \H[Y_i]) - \varepsilon (\H[X] + \H[Y])
\]
as desired. 
\end{proof}

\section{Inductive step}\label{sec:inductive-step}

Let $X$ and $Y$ be random variables and $X_j, Y_j$ be independent copies. 
In the following lemma, we apply $\mb B$ several times to remove additive structure between $(X_1+X_2, Y_1+Y_2)$ and between $(X_1+Y_2, Y_1+X_2)$. 
\begin{lemma}\label{lem:MakeSumsetsNotDouble}
Assume $\mb B(\eta_0, \varepsilon_0, L_0)$ holds.
For any $G$-valued independent random variables $X$ and $Y$, there exists a subspace $V$ with $\H[\U_V] \leq \frac{4}{\varepsilon_0} L_0$ such that, letting $\pi = \pi_{V}$,
\small
\begin{align}
\H[\pi(X_1+X_2+Y_1+Y_2)] &\geq (1 - \eta_0) (\H[\pi(X_1+X_2)] + \H[\pi(Y_1+Y_2)]) -  4 \varepsilon_0  (\H[X]+\H[Y]),\label{eq:SumsetsDouble1} \\ 
\H[\pi(X_1+X_2+Y_1+Y_2)] &\geq (1 - \eta_0) (\H[\pi(X_1+Y_2)] + \H[\pi(Y_1+X_2)]) -  4 \varepsilon_0  (\H[X]+\H[Y]).\label{eq:SumsetsDouble2}
\end{align}
\normalsize
\end{lemma}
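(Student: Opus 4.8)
The plan is to apply the hypothesis $\mb B(\eta_0,\varepsilon_0,L_0)$ twice — once to the pair $(X_1+X_2,\,Y_1+Y_2)$ and once to the pair $(X_1+Y_2,\,Y_1+X_2)$ — obtaining subspaces $V_1,V_2$ with $\H[\U_{V_i}]\le L_0(\H[X_1+X_2]+\H[Y_1+Y_2])\le 2L_0(\H[X]+\H[Y])$, and then to take $V=V_1+V_2$. The issue is that a naive sum of two subspaces, each of entropy $\le 2L_0(\H[X]+\H[Y])$, only gives $\H[\U_V]\le 4L_0(\H[X]+\H[Y])$, which is already the right order but the stated bound is the slightly more delicate $\tfrac{4}{\varepsilon_0}L_0$; the extra $\varepsilon_0^{-1}$ factor suggests the real argument is an iteration rather than a single application, so that one controls the total entropy killed in $X,Y$ (not $X+X$) by $\varepsilon_0(\H[X]+\H[Y])$ and can afford roughly $\varepsilon_0^{-1}$ steps before stopping. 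So the actual plan is an exhaustion/stopping-time argument in the spirit of the proof of Lemma~\ref{lem:BasicRelationsAB}(b).

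Concretely, I would build an increasing sequence of subspaces $0=V^{(0)}\subset V^{(1)}\subset\cdots$ as follows. Given $V^{(j)}$ with $\pi^{(j)}=\pi_{V^{(j)}}$, if~\eqref{eq:SumsetsDouble1} fails for $\pi^{(j)}$ — i.e. $\H[\pi^{(j)}(X_1+X_2+Y_1+Y_2)] < (1-\eta_0)(\H[\pi^{(j)}(X_1+X_2)]+\H[\pi^{(j)}(Y_1+Y_2)]) - 4\varepsilon_0(\H[X]+\H[Y])$ — then apply $\mb B(\eta_0,\varepsilon_0,L_0)$ to the variables $\pi^{(j)}(X_1+X_2)$ and $\pi^{(j)}(Y_1+Y_2)$ to get a subspace $W\subset G/V^{(j)}$ with $\H[\U_W]\le L_0(\H[\pi^{(j)}(X_1+X_2)]+\H[\pi^{(j)}(Y_1+Y_2)])\le 2L_0(\H[X]+\H[Y])$, and set $V^{(j+1)}$ to be the preimage of $W$; do the analogous thing if~\eqref{eq:SumsetsDouble2} fails. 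The key point is that each step strictly decreases $\H[\pi^{(j)}(X)]+\H[\pi^{(j)}(Y)]$ by a definite amount. To see why, combine: (i) the definition of $\mb B$, which since~\eqref{eq:SumsetsDouble1} failed for $V^{(j)}$ but the $\mb B$-subspace $W$ satisfies the reverse-type inequality up to $\varepsilon_0(\H[\pi^{(j)}(X_1+X_2)]+\H[\pi^{(j)}(Y_1+Y_2)])$, forces a drop of at least (say) $2\varepsilon_0(\H[X]+\H[Y])$ in $\H[\pi(X_1+X_2)]+\H[\pi(Y_1+Y_2)]$; and (ii) the elementary submodularity-type fact that $\H[\pi(X_1+X_2)]\le 2\H[\pi(X)]$ and more to the point that if $\H[\pi^{(j+1)}(X_1+X_2)] < \H[\pi^{(j)}(X_1+X_2)] - \delta$ then $\H[\pi^{(j+1)}(X)] + \H[\pi^{(j+1)}(Y)] \le \H[\pi^{(j)}(X)] + \H[\pi^{(j)}(Y)] - c\delta$ for an absolute $c$ (e.g. via~\eqref{eq:SubspaceSubmodularity} applied to $X_1\times X_2$, since projecting a product kills entropy in at least one coordinate). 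Thus after at most $\varepsilon_0^{-1}$-ish steps the process stops, at which point both~\eqref{eq:SumsetsDouble1} and~\eqref{eq:SumsetsDouble2} hold, and the accumulated subspace has $\H[\U_V]\le 2L_0\cdot\varepsilon_0^{-1}\cdot O(1) \le \tfrac{4}{\varepsilon_0}L_0$ after bookkeeping.

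The step I expect to be the main obstacle is (ii): quantifying how a drop in $\H[\pi(X_1+X_2)]+\H[\pi(Y_1+Y_2)]$ (or in the mixed pair $\H[\pi(X_1+Y_2)]+\H[\pi(Y_1+X_2)]$) translates into a proportional drop in $\H[\pi(X)]+\H[\pi(Y)]$, uniformly over which of the two inequalities triggered the step, so that the termination count is genuinely $O(\varepsilon_0^{-1})$ rather than something depending on the entropies. I would handle this by noting $X_1+X_2$ is $\pi_{V}$ applied to a single linear image of $(X_1,X_2)$ and using~\eqref{eq:SubspaceSubmodularity} together with the fact that $\H[\pi_V(X_1+X_2)]\ge \H[\pi_V(X_1)\mid X_2]=\H[\pi_V(X)]$ when the variables are independent — giving $\H[\pi_V(X)]\le\H[\pi_V(X_1+X_2)]$ and hence monotone control in the right direction — so a large decrease upstairs forces a decrease downstairs. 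The remaining pieces — that the $\mb B$ application gives the claimed $\H[\U_W]$ bound, that preimages add entropies correctly, and the final constant chase to land at $\tfrac{4}{\varepsilon_0}L_0$ — are routine.
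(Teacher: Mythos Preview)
Your iteration framework and step (i) are exactly right and match the paper: if \eqref{eq:SumsetsDouble1} fails at stage $j$, you apply $\mb B$ to the pair $(\pi^{(j)}(X_1+X_2),\pi^{(j)}(Y_1+Y_2))$, and comparing the failed inequality (slack $4\varepsilon_0$) with the $\mb B$-guarantee (slack $\le 2\varepsilon_0$) forces
\[
\H[\pi^{(j+1)}(X_1+X_2)]+\H[\pi^{(j+1)}(Y_1+Y_2)] \le \H[\pi^{(j)}(X_1+X_2)]+\H[\pi^{(j)}(Y_1+Y_2)] - 2\varepsilon_0(\H[X]+\H[Y]).
\]
But you then take an unnecessary detour. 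Step (ii)---converting this into a proportional drop in $\H[\pi(X)]+\H[\pi(Y)]$---is not needed, and in fact the implication you want is \emph{false} in general. Take $G=\F_2^4$, $W=\langle e_1,e_2\rangle$, and $X$ uniform on $\{0,\,e_3,\,e_4,\,e_1+e_3+e_4\}$: then $\H[X\,|\,\pi_W(X)]=0$ while $\H[X_1+X_2\,|\,\pi_W(X_1+X_2)]=3/4$. The inequality $\H[\pi_V(X)]\le\H[\pi_V(X_1+X_2)]$ you cite goes the wrong way for this purpose (indeed the fibring inequality gives $\H[X_1+X_2\,|\,\pi_W(X_1+X_2)]\ge \H[X\,|\,\pi_W(X)]$, not $\le$).

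The paper simply keeps the two sumset quantities themselves as the potentials. Both
\[
P_1^{(j)}:=\H[\pi^{(j)}(X_1+X_2)]+\H[\pi^{(j)}(Y_1+Y_2)],\qquad P_2^{(j)}:=\H[\pi^{(j)}(X_1+Y_2)]+\H[\pi^{(j)}(Y_1+X_2)]
\]
are nonnegative, start at $\le 2(\H[X]+\H[Y])$, are monotone nonincreasing under further projection, and $P_i$ drops by at least $2\varepsilon_0(\H[X]+\H[Y])$ whenever its corresponding inequality is the one being fixed. After $j$ steps one of the two has been fixed at least $j/2$ times, so $j\varepsilon_0\le 2$, giving termination in $\le 2/\varepsilon_0$ steps and the bound $\H[\U_V]\le \tfrac{4}{\varepsilon_0}L_0(\H[X]+\H[Y])$. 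Drop step (ii); your step (i) already closes the argument once you track $P_1,P_2$ instead of $\H[\pi(X)]+\H[\pi(Y)]$.
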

\begin{proof}
Start by setting 
\[
X^{(0)} = X,\quad Y^{(0)} = Y,\quad V^{(0)} = \{0\}. 
\]
Assume~\eqref{eq:SumsetsDouble1} fails, so
\begin{align}\label{eq:AssumedFailed}
\H[X_1+X_2+Y_1+Y_2] \leq (1 - \eta_0) (\H[X_1+X_2] + \H[Y_1+Y_2]) -  4 \varepsilon_0 (\H[X]+\H[Y]).
\end{align}
Apply $\mb B(\eta_0, \varepsilon_0, L_0)$ to $X_1+X_2$ and $Y_1+Y_2$ to produce a subspace $V^{(1)}$ with 
\[
\H[\U_{V^{(1)}}] \leq L_0 (\H[X_1 + X_2] + \H[Y_1 + Y_2]) \leq 2L_0 (\H[X] + \H[Y])
\]
such that, for
\[
X^{(1)} = \pi_{V^{(1)}}(X),\quad Y^{(1)} = \pi_{V^{(1)}}(Y),
\]
we have
\begin{align}
\H[X^{(1)}_1 + X^{(1)}_2 + Y^{(1)}_1 + Y^{(1)}_2] &\geq (1 - \eta_0) (\H[X^{(1)}_1 + X^{(1)}_2] + \H[Y^{(1)}_1 + Y^{(1)}_2]) -  \varepsilon_0 (\H[X_1 + X_2]+\H[Y_1 + Y_2]), \nonumber \\ 
&\geq (1 - \eta_0) (\H[X^{(1)}_1 + X^{(1)}_2] + \H[Y^{(1)}_1 + Y^{(1)}_2]) -  2\varepsilon_0 (\H[X]+\H[Y]).\label{eq:StructAfterApplyingB}
\end{align}
Comparing~\eqref{eq:AssumedFailed} to~\eqref{eq:StructAfterApplyingB} gives
\begin{align*}
\H[X^{(1)}_1 + X^{(1)}_2] + \H[Y^{(1)}_1 + Y^{(1)}_2]  &\leq \H[X_1+X_2] + \H[Y_1+Y_2] -  (4 \varepsilon_0 - 2\varepsilon_0) (\H[X]+\H[Y]).
\end{align*}
If~\eqref{eq:SumsetsDouble2} fails instead, we apply $\mb B(\eta_0, \varepsilon_0, L_0)$ to $X_1+Y_2$ and $Y_1+X_2$. It is possible that we apply $\mb B$ to fix~\eqref{eq:SumsetsDouble1}, but the new variables fail~\eqref{eq:SumsetsDouble2}, so we have to iterate this procedure. We produce a sequence 
\[
X^{(j)} = \pi_{V^{(j)}}(X),\quad Y^{(j)} = \pi_{V^{(j)}}(Y),\quad V^{(j)} \subset G
\]
where $\H[\U_{V^{(j)}}] \leq 2jL_0\, (\H[X]+\H[Y])$. If we use $\mb B$ to fix~\eqref{eq:SumsetsDouble1} at least $j/2$ times, then
\[
\H[X^{(j)}_1 + X^{(j)}_2] + \H[Y_1^{(j)} + Y_2^{(j)}]  \leq \H[X_1+X_2] + \H[Y_1+Y_2] - \frac{j}{2}(2\varepsilon_0) (\H[X]+\H[Y]),
\]
and if we use it to fix~\eqref{eq:SumsetsDouble1} at least $j/2$ times, 
\[
\H[X^{(j)}_1+Y^{(j)}_2] + \H[X^{(j)}_2 + Y^{(j)}_1] \leq \H[X_1+Y_2] + \H[X_2+Y_1] -  \frac{j}{2}(2\varepsilon_0) (\H[X]+\H[Y]).
\]
If 
\[
\frac{j}{2}(2\varepsilon_0) > 2
\]
then the right hand side is negative, thus this pocess must terminate after at most $\frac{2}{\varepsilon_0}$ steps. The final subspace has dimension $\leq \frac{4}{\varepsilon_0} L_0 (\H[X] + \H[Y])$. 
\normalsize
\end{proof}

The next lemma roughly says that if $X$ and $Y$ can be decomposed into pieces, most pairs of which additively interact with each other through a subspace, then there is one big subspace that removes a substantial amount of entropy from $Y$. 
\begin{lemma}[Local to global structure]\label{lem:LocalToGlobal}
Let $X$ and $Y$ be independent $G$-valued random variables. Suppose we are given couplings $(X, U)$ and $(Y, W)$, where the only dependencies are between $(X, U)$ and $(Y, W)$. For $u \in \supp U$ and $w \in \supp W$, we consider the conditioned random variables
\[
X_u := (X | U = u),\quad Y_w := (Y | W = w). 
\]
Suppose that for every $u \in \supp U$ and $w \in \supp W$ we are given a subspace $V(u,w)$ such that 
\begin{equation}\label{eq:LocalStructureHyp}
\E_{u\sim U, w\sim W}\, \s[X_u | \pi_{V(u,w)}(X_u); Y_w | \pi_{V(u,w)}(Y_w)] \geq \zeta (\H[X] + \H[Y]).
\end{equation}
Then there exists a subspace $\bar V$ with 
\[
\H[\U_{\bar V}] \leq \frac{8}{\zeta^2} \E_{u,w} \H[\U_{V(u,w)}]
\]
and
\[
\H[Y | \pi_{\bar V}(Y)] \geq \frac{\zeta}{4} (\H[X] + \H[Y]). 
\]
\end{lemma}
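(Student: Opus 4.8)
The plan is to pass to one well-chosen fibre on the $X$-side and then assemble $\bar V$ from a short list of independent fibre subspaces on the $Y$-side. Write $S=\H[X]+\H[Y]$, $H=\E_{u,w}\H[\U_{V(u,w)}]$, and $f(u,w)=\s[X_u\mid\pi_{V(u,w)}(X_u);\,Y_w\mid\pi_{V(u,w)}(Y_w)]$, so that \eqref{eq:LocalStructureHyp} reads $\E_{u,w}f(u,w)\ge\zeta S$. By \eqref{eq:TrivialEstimateDoublingMass}, $f(u,w)\le\H[Y_w\mid\pi_{V(u,w)}(Y_w)]\le\H[Y_w]$, hence $\E_w f(u,w)\le\H[Y\mid W]\le S$ for every $u$; also $f(u,w)\le\H[\U_{V(u,w)}]$, so in particular $H\ge\zeta S$. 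A reverse Markov bound on $u\mapsto\E_w f(u,w)$ combined with a Markov bound on $u\mapsto\E_w\H[\U_{V(u,w)}]$ produces a value $u^*$ with $\E_w f(u^*,w)\ge\tfrac\zeta2 S$ and $\E_w\H[\U_{V(u^*,w)}]\le\tfrac4\zeta H$; write $X^*=X_{u^*}$ and $V_w=V(u^*,w)$.

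The key point is an ``interaction minus stick-out'' inequality: for every subspace $\bar V$,
\[
\H[Y\mid\pi_{\bar V}(Y)]\ \ge\ \E_w f(u^*,w)\ -\ \E_w\big(\H[\U_{\bar V+V_w}]-\H[\U_{\bar V}]\big).
\]
Indeed, conditioning on $W$ gives $\H[Y\mid\pi_{\bar V}(Y)]\ge\E_w\H[Y_w\mid\pi_{\bar V}(Y_w)]$; and for fixed $w$, since $\bar V\subseteq\bar V+V_w$ the fibres of $\pi_{\bar V}$ over a fibre of $\pi_{\bar V+V_w}$ have dimension $\H[\U_{\bar V+V_w}]-\H[\U_{\bar V}]$, so $\H[Y_w\mid\pi_{\bar V}(Y_w)]\ge\H[Y_w\mid\pi_{\bar V+V_w}(Y_w)]-(\H[\U_{\bar V+V_w}]-\H[\U_{\bar V}])\ge f(u^*,w)-(\H[\U_{\bar V+V_w}]-\H[\U_{\bar V}])$, using $\bar V+V_w\supseteq V_w$ and $\H[Y_w\mid\pi_{V_w}(Y_w)]\ge f(u^*,w)$; averaging over $w$ gives the claim. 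So it suffices to produce $\bar V$ with $\H[\U_{\bar V}]\le\tfrac8{\zeta^2}H$ whose expected ``stick-out'' $\E_w(\H[\U_{\bar V+V_w}]-\H[\U_{\bar V}])$ is at most $\tfrac\zeta4 S$, since then the displayed inequality gives $\H[Y\mid\pi_{\bar V}(Y)]\ge\tfrac\zeta2 S-\tfrac\zeta4 S=\tfrac\zeta4 S$.

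To build such a $\bar V$, sample $w^{(1)},w^{(2)},\dots$ i.i.d.\ from $W$ and set $\bar V_0=\{0\}$, $\bar V_j=\bar V_{j-1}+V_{w^{(j)}}$. The expected $j$-th dimension increment $\delta_j:=\E[\H[\U_{\bar V_j}]-\H[\U_{\bar V_{j-1}}]]$ is non-increasing in $j$ — adjoining a further independent subspace to a larger base can only decrease the marginal dimension it contributes, by submodularity of $\dim$ — with $\tfrac\zeta2 S\le\delta_1=\E_w\H[\U_{V_w}]\le\tfrac4\zeta H$ (the lower bound because $f\le\H[\U_{V_w}]$). One then takes $\bar V=\bar V_k$, where $k$ is chosen so that $\H[\U_{\bar V_k}]=\sum_{j\le k}\delta_j$ stays inside the budget $\tfrac8{\zeta^2}H$ while the expected stick-out of $\bar V_k$ (bounded by $\delta_{k+1}$) falls below $\tfrac\zeta4 S$; arranging that such a $k$ exists, using the monotonicity of $(\delta_j)$ and the bounds $\tfrac\zeta2 S\le\delta_1\le\tfrac4\zeta H$, is the technical heart of the argument and the source of the exponents $\tfrac8{\zeta^2}$ and $\tfrac\zeta4$. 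Finally one derandomises: both $\H[\U_{\bar V_k}]$ and $\tfrac\zeta4 S-\H[Y\mid\pi_{\bar V_k}(Y)]$ are non-negative, $O(S)$-bounded, and controlled in expectation (the second via the inequality above), so with the constants in the preceding steps tuned so each failure event has probability $<\tfrac12$, some concrete choice of $w^{(1)},\dots,w^{(k)}$ gives a $\bar V=\bar V_k$ with both $\H[\U_{\bar V}]\le\tfrac8{\zeta^2}H$ and $\H[Y\mid\pi_{\bar V}(Y)]\ge\tfrac\zeta4 S$.

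The step I expect to be the main obstacle is the construction of $\bar V$: a fresh fibre subspace $V_w$ must barely stick out of $\bar V$, which forces $\bar V$ to be large, yet $\H[\U_{\bar V}]$ must remain within $\tfrac8{\zeta^2}H$ (which, since $H\ge\zeta S$, is already at least $\tfrac8\zeta S$). Reconciling these two demands — keeping the partial sums $\sum_{j\le k}\delta_j$ small while driving $\delta_{k+1}$ below $\tfrac\zeta4 S$, perhaps via a more refined accounting of the stick-out than the crude bound $\delta_{k+1}$ — is where the quantitative work lies; everything else (the averaging in Step 1, the entropy manipulations in Step 2, the derandomisation) is routine.
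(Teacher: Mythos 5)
Your overall architecture matches the paper's: pick a good fibre $u^*$ on the $X$-side, take $\bar V$ to be a sum of $k\lesssim \zeta^{-1}$ independent fibre subspaces $V(u^*,w^{(j)})$, stop when the marginal contribution of a fresh subspace is small, and derandomise with Markov. But there is a genuine gap exactly at the point you flag as "the technical heart": your stopping rule and your error term are both measured in \emph{dimension}, and dimension is not a bounded resource. Your key inequality charges the loss $\E_w\bigl(\H[\U_{\bar V+V_w}]-\H[\U_{\bar V}]\bigr)$, and you need some $k\le 2/\zeta$ with $\delta_{k+1}\le\frac{\zeta}{4}(\H[X]+\H[Y])$. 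Nothing forces this. The increments $\delta_j$ are non-increasing, but their total is bounded only by the ambient dimension $n$, not by $\H[X]+\H[Y]$; and your own lower bound $\delta_1\ge\frac{\zeta}{2}(\H[X]+\H[Y])$ already starts above the target threshold. Concretely, replace each $V(u^*,w)$ by $V(u^*,w)+R_w$ where the $R_w$ are huge subspaces in general position, disjoint from everything relevant: the hypothesis \eqref{eq:LocalStructureHyp} is unaffected (enlarging $V$ only increases the conditional doubling mass on the relevant part, and the interaction hypothesis never sees $R_w$), yet every $\delta_j$ equals roughly $\E_w\dim R_w$, which can exceed $\frac{\zeta}{4}(\H[X]+\H[Y])$ for arbitrarily many steps. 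So the claimed $k$ need not exist, and no "more refined accounting" of the dimension stick-out can rescue it.

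The fix is to measure the stick-out in \emph{entropy of $X_{u^*}$} rather than in dimension. The paper's Lemma~\ref{lem:YSizeLowerBd} (a consequence of the fibring inequality \eqref{eq:FiberInteraction} together with the trivial bound \eqref{eq:TrivialEstimateDoublingMass}) gives, for $W\subset V$,
\[
\H[Y\mid\pi_W(Y)]\ \ge\ \s[X\mid\pi_V(X);Y\mid\pi_V(Y)]\ -\ \H[\pi_W(X)\mid\pi_V(X)],
\]
applied with $V=V_{k+1}$ and $W=V_{k+1}\cap V_{\le k}$; subspace submodularity \eqref{eq:SubspaceSubmodularity} then bounds the correction by $\H[\pi_{V_{\le k}}(X_{u})]-\H[\pi_{V_{\le k+1}}(X_{u})]$. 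This is the entropy lost by $X_u$ when one more fibre subspace is adjoined, and the pigeonhole is applied to $h_j=\E\,\H[\pi_{V_{\le j}}(X_u)]$, which is monotone and bounded by $\H[X]$ — so within $2/\zeta$ steps some decrement drops below $\frac{\zeta}{2}\H[X]$, no matter how fast the dimensions grow. The asymmetry here (the conclusion is about $Y$, the error term is about $X$) is essential and is exactly what your symmetric, dimension-based inequality gives away. Your first step (reverse Markov to fix $u^*$) and your final derandomisation are fine modulo constant tuning, but without replacing the dimension accounting by the $X$-entropy accounting the proof does not close.
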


Here is the proof idea. Start by fixing $u \in \supp U$, and suppose that $V_u$ is a subspace that removes a substantial amount of entropy from $X_u$, and that larger subspaces do not remove much more entropy. For typical $w \sim W$, we would like to say that $V_u$ removes substantial entropy from $Y_w$. To prove this, we consider the diagram 
\begin{equation}\label{eq:SequenceOfSubspacesDiagram}
\begin{tikzcd}
    X_u \dar & Y_w \dar \\ 
    X_u \mod V_u \cap V(u,w) \dar & Y_w \mod V_u \cap V(u,w) \dar\\ 
    X_u \mod V(u,w)  & Y_w \mod V(u,w) 
\end{tikzcd}
\end{equation}
Our assumption tells us that $X_u$ and $Y_w$ additively interact through the subspace $V(u,w)$. 
By subspace submodularity, Eq.~\eqref{eq:SubspaceSubmodularity},
\[
\H[\pi_{V_u \cap V(u,w)}(X_u)] - \H[\pi_{V(u,w)}(X_u)] \leq \H[\pi_{V(u,w)}(X_u)] - \H[\pi_{V_u + V(u,w)}(X_u)],
\]
and the maximality assumption on $V_u$ tells us the right hand side is small. It follows that there is not much additive interaction between $X_u$ and $Y_w$ in $V(u,w) \bmod V_u \cap V(u,w)$ (corresponding to the bottom arrows). By~\eqref{eq:FiberInteraction}, $X_u$ and $Y_u$ must additively interact through the subspace $V_u \cap V(u,w)$ (corresponding to the top arrows). That implies $V_u \cap V(u,w)$ removes a significant amount of entropy from $Y_w$. Because this is true for typical $Y_w$, the one subspace $V_u$ removes a significant amount of entropy from $Y$. 

In order to execute this argument, we judiciously choose an integer $k$, select a random $u \sim U$ and an independent random sequence $w^{(1)} \sim W, \ldots, w^{(k)} \sim W$, and set 
\[
\bar V = V(u, w^{(1)}) + \dots + V(u, w^{(k)}). 
\]
The integer $k$ is chosen so that if we added another random subspace on the right hand side, it would not remove much more entropy from $X_u$. 

Before proving~\Cref{lem:LocalToGlobal}, we state a lemma related to diagram~\eqref{eq:SequenceOfSubspacesDiagram}. The lemma says, if $X$ and $Y$ additvely interact through a subspace $V$, and if $W \subset V$ is another subspace such that not much entropy is killed going from $(X \bmod W)$ to $(X \bmod V)$, then entropy must be killed going from $Y$ to $(Y \bmod W)$. 
\begin{lemma}\label{lem:YSizeLowerBd}
Let $X$ and $Y$ be independent $G$-valued random variables. Let $W \subset V$ be subspaces. Then 
\[
\H[Y | \pi_W(Y)] \geq \s[X | \pi_V(X) ; Y | \pi_V(Y)] - \H[\pi_W(X) | \pi_V(X)]. 
\]
\end{lemma}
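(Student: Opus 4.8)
\textbf{Proof plan for Lemma~\ref{lem:YSizeLowerBd}.}
The plan is to play the three-level tower $Y \to \pi_W(Y) \to \pi_V(Y)$ against the additive interaction of $X$ and $Y$, using the fiber-interaction inequality~\eqref{eq:FiberInteraction} run ``backwards.'' First I would apply~\eqref{eq:FiberInteraction} with the pair of nested subspaces $W \subset V$: it gives
\[
\s[X | \pi_V(X); Y | \pi_V(Y)] \leq \s[X | \pi_W(X); Y | \pi_W(Y)] + \s[\pi_W(X) | \pi_V(X); \pi_W(Y) | \pi_V(Y)].
\]
The point is that the second term on the right is controlled: by the trivial estimate on doubling mass~\eqref{eq:TrivialEstimateDoublingMass} applied to the conditioned variables (and then taking the expectation over $\pi_V(X), \pi_V(Y)$), it is at most $\H[\pi_W(X) | \pi_V(X)]$. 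So I am left needing $\s[X | \pi_W(X); Y | \pi_W(Y)] \leq \H[Y | \pi_W(Y)]$, which is again just~\eqref{eq:TrivialEstimateDoublingMass} (in the form $\s[A;B] \le \H[B]$) applied fiberwise and averaged. Chaining these three bounds yields exactly the claimed inequality
\[
\H[Y | \pi_W(Y)] \geq \s[X | \pi_W(X) ; Y | \pi_W(Y)] \geq \s[X | \pi_V(X) ; Y | \pi_V(Y)] - \H[\pi_W(X) | \pi_V(X)].
\]

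I should double-check the one subtle point: in~\eqref{eq:FiberInteraction} as stated in the preliminaries, the middle quantity $\s[\pi_W(X) | \pi_V(X); \pi_W(Y) | \pi_V(Y)]$ is the conditional doubling mass where $\pi_W(X)$ is conditioned on $\pi_V(X)$ — and since $\pi_V$ factors through $\pi_W$, conditioning on $\pi_V(X)$ is conditioning on a coarsening, so $\H[\pi_W(X) | \pi_V(X) = t]$ is a genuine entropy and~\eqref{eq:TrivialEstimateDoublingMass} applies term by term. Taking expectations over $t \sim \pi_V(X)$ turns the bound into $\H[\pi_W(X) | \pi_V(X)]$, which matches the statement. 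The other application of~\eqref{eq:TrivialEstimateDoublingMass} is to $X | \pi_W(X); Y | \pi_W(Y)$: here we use $\s[A;B] \le \H[B]$ fiberwise with $A = X_t$, $B = Y_r$ and average, giving $\H[Y | \pi_W(Y)]$.

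I do not expect any real obstacle here — the lemma is essentially a repackaging of the fiber-interaction inequality~\eqref{eq:FiberInteraction} together with the trivial bound~\eqref{eq:TrivialEstimateDoublingMass}. The only thing to be careful about is bookkeeping which variable is being conditioned on which (and over what the expectations are taken), so I would write the proof as the short two-line chain above with explicit citations of~\eqref{eq:FiberInteraction} and~\eqref{eq:TrivialEstimateDoublingMass}, rather than unpacking any entropy identities by hand.
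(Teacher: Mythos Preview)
Your proposal is correct and matches the paper's proof essentially line for line: apply~\eqref{eq:FiberInteraction} with $W \subset V$, then bound each of the two resulting terms by~\eqref{eq:TrivialEstimateDoublingMass} (one in the $Y$ variable, one in the $X$ variable). There is nothing to add.
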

\begin{proof}
By~\eqref{eq:FiberInteraction},
\begin{align*}
    \s[X | \pi_V(X) ; Y | \pi_V(Y)] &\leq \s[X | \pi_W(X) ; Y | \pi_W(Y)] + \s[\pi_W(X) | \pi_V(X) ; \pi_W(Y) | \pi_V(Y)].
\end{align*}
By~\eqref{eq:TrivialEstimateDoublingMass}, 
\begin{align*}
    \s[X | \pi_W(X) ; Y | \pi_W(Y)] &\leq \H[Y | \pi_W(Y)], \\ 
    \s[\pi_W(X) | \pi_V(X) ; \pi_W(Y) | \pi_V(Y)] &\leq \H[\pi_W(X) | \pi_V(X)].
\end{align*}
The result follows.
\end{proof}

\begin{proof}[Proof of~\Cref{lem:LocalToGlobal}]
Let $u \in \supp U$ and $w^{(1)}, w^{(2)}, \ldots \in \supp W$ be a sequence of points .
For $j \geq 1$, we let
\begin{align*}
V_j &= V(u, w^{(j)}), \\ 
V_{\leq j} &= V_1 + \dots + V_j.
\end{align*}
These subspaces depend on the sequence at hand. 

We set 
\[
h_j := \E_{u \sim U} \E_{w^{(1)}, \ldots, w^{(k)} \sim W} \H[\pi_{V_{\leq j}}(X_u)].
\]
Note that $h_0 = \H[X | U] = \E_{u\sim U} \H[X_u] \leq \H[X]$. For any fixed sequence $(u, w^{(1)}, \ldots)$ the entropy $\H[\pi_{V_{\leq j}}(X_u)]$ is monotonically decreasing in $j$, so $h_j$ is monotonically decreasing as well. Fix a constant $\tau > 0$ to be chosen later. By the pigeonhole principle, there is some $k \leq \tau^{-1}$ for which 
\[
h_k - h_{k+1} \leq \tau h_0. 
\]

Fix a sequence $u, w^{(1)}, \ldots, w^{(k+1)}$. 
By~\Cref{lem:YSizeLowerBd}, 
\begin{align*}
    \H[Y_{w^{(k+1)}} | \pi_{V_{k+1} \cap V_{\leq k}}(Y_{w^{(k+1)}})] &\geq \s[X_u | \pi_{V_{ k+1}}(X_u) ; Y_{w^{(k+1)}} | \pi_{V_{ k+1}}(Y_{w^{(k+1)}})] - \H[\pi_{V_{k+1} \cap V_{\leq k}}(X_u) | \pi_{V_{k+1}}(X_u)] \\ 
    &\stackrel{\eqref{eq:SubspaceSubmodularity}}{\geq} \s[X_u | \pi_{V_{ k+1}}(X_u) ; Y_{w^{(k+1)}} | \pi_{V_{ k+1}}(Y_{w^{(k+1)}})] - \H[\pi_{V_{\leq k}}(X_u) | \pi_{V_{\leq k+1}}(X_u)].
\end{align*}
Now fix $u, w^{(1)}, \ldots, w^{(k)}$ and take an expected value over $w^{(k+1)}$ to find, by submodularity, 
\begin{align*}
\H[Y | \pi_{V_{\leq k}}(Y)] &\geq \H[Y | \pi_{V_{k+1} \cap V_{\leq k}}(Y)] \\ 
&\geq  \E_{w^{(k+1)} \sim W} \H[Y_{w^{(k+1)}} | \pi_{V_{k+1} \cap V_{\leq k}}(Y_{w^{(k+1)}})]  \\ 
&\geq \E_{w^{(k+1)}} \bigl(\s[X_u | \pi_{V_{ k+1}}(X_u) ; Y_{w^{(k+1)}} | \pi_{V_{ k+1}}(Y_{w^{(k+1)}})] - \E_{w^{(k+1)}}  \H[\pi_{V_{\leq k}}(X_u) | \pi_{V_{\leq k+1}}(X_u)]\bigr).
\end{align*}
Average over $u, w^{(1)}, \ldots, w^{(k)}$ to find 
\begin{align*}
\E_{u, w^{(1)}, \ldots, w^{(k)}} \H[Y | \pi_{V_{\leq k}}(Y)] &\geq \E_{u, w^{(k+1)}} \s[X_u | \pi_{V_{ k+1}}(X_u) ; Y_{w^{(k+1)}} | \pi_{V_{ k+1}}(Y_{w^{(k+1)}})] - \\ 
&\qquad \E_{u, w^{(1)}, \ldots, w^{(k+1)}}  \H[\pi_{V_{\leq k}}(X_u) | \pi_{V_{\leq k+1}}(X_u)] \\ 
&= \E_{u,w} \s[X_u | \pi_{V(u,w)}(X_u) ; Y_w | \pi_{V(u,w)}(Y_w)] - (h_{k} - h_{k+1}) \\ 
&\geq \zeta(\H[X] + \H[Y]) - \tau \H[X] \geq (\zeta - \tau)(\H[X] + \H[Y]). 
\end{align*}
Choose $\tau = \zeta/2$. 
For any subspace $V$, 
\[
\H[Y | \pi_{V}(Y)] \leq \H[Y], 
\]
so 
\begin{align*}
\frac{\zeta}{2}(\H[X] + \H[Y]) &\leq \E_{u, w^{(1)}, \ldots, w^{(k)}} \H[Y | \pi_{V_{\leq k}}(Y)] \\ 
&\leq \frac{\zeta}{4}(\H[X] + \H[Y]) + \H[Y] \Pr\bigl[\,\H[Y | \pi_{V_{\leq k}}(Y)] \geq \frac{\zeta}{4}(\H[X] + \H[Y])  \, \bigr]
\end{align*}
and thus 
\[
\Pr\bigl[\,\H[Y | \pi_{V_{\leq k}}(Y)] \geq \frac{\zeta}{4}(\H[X] + \H[Y])  \, \bigr] \geq \frac{\zeta}{4}. 
\]
To estimate the size of $V_{\leq k}$, use linearity of expectation,
\[
\E_{u, w^{(1)}, \ldots, w^{(k)}} \H[\U_{V_{\leq k}}] \leq \E_{u, w^{(1)}, \ldots, w^{(k)}}(\sum_{i \leq k} \H[\U_{V_{i}}] ) \leq k \, \E_{u,w}\H[\U_{V(u,w)}],
\]
and by Markov's inequality 
\[
\Pr[\H[\U_{V_{\leq k}}]  > \frac{4}{\zeta} k \, \E_{u,w}\H[\U_{V(u,w)}] ] < \frac{\zeta}{4}.
\]
Thus there is some choice of $u, w^{(1)}, \ldots, w^{(k)}$ for which 
\begin{equation}\label{eq:LowerBdYFibers}
\H[Y | \pi_{V_{\leq k}}(Y)] \geq \frac{\zeta}{4}(\H[X] + \H[Y])
\end{equation}
and 
\[
\H[\U_{V_{\leq k}}] \leq \frac{4}{\zeta} k \E_{u,w}\H[\U_{V(u,w)}] \leq \frac{8}{\zeta^2} \E_{u,w}\H[\U_{V(u,w)}].
\]
\end{proof}

We are ready to prove the inductive step.
\begin{proposition}\label{prop:InductiveStep}
There exists a constant $C > 0$ such that, for $\eta_0 \in (0, 1/2]$,
\[
\mb B\bigl(\eta_0,\; C^{-1} \eta_0^2,\; L_0\bigr) \Longrightarrow \mb A\bigl(\eta_0 - C^{-1} \eta_0^2,\;  C^3 \eta_0^{-4} \max\{L_0, 1\},\; C^{-1} \eta_0^2\bigr). 
\]
In fact, we may take $C = 2^{15}$. 
\end{proposition}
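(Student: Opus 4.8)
The plan is to run the Gowers--Green--Manners--Tao inductive step for the pair $(X,Y)$, using $\mb B(\eta_0,\varepsilon_0,L_0)$ with $\varepsilon_0:=C^{-1}\eta_0^2$ as the sole inductive input, and to globalize the ``fiber'' part of the argument via \Cref{lem:LocalToGlobal}. So let $X,Y$ be independent with $\s[X;Y]\ge(\eta_0-\varepsilon_0)(\H[X]+\H[Y])$. First apply \Cref{lem:MakeSumsetsNotDouble} to get a subspace $V_1$ with $\H[\U_{V_1}]\le\tfrac{4}{\varepsilon_0}L_0(\H[X]+\H[Y])$ such that, with $X':=\pi_{V_1}(X)$ and $Y':=\pi_{V_1}(Y)$, both \eqref{eq:SumsetsDouble1} and \eqref{eq:SumsetsDouble2} hold for $\pi=\pi_{V_1}$. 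If $\H[X']+\H[Y']\le(1-\varepsilon_0)(\H[X]+\H[Y])$, output $V_1$: this is the defining inequality of $\mb A$ with $c=\varepsilon_0=C^{-1}\eta_0^2$, and $\tfrac{4}{\varepsilon_0}L_0\le C^3\eta_0^{-4}\max\{L_0,1\}$. Otherwise $\H[X']+\H[Y']>(1-\varepsilon_0)(\H[X]+\H[Y])$, and since $\H[X'+Y']\le\H[X+Y]$ this forces $\s[X';Y']\ge(\eta_0-2\varepsilon_0)(\H[X']+\H[Y'])$; from here on we work with $X',Y'$.

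The first substantive step is a lower bound on the interaction of the fibers $X'_u:=(X'_1\mid X'_1+Y'_2=u)$ and $Y'_w:=(Y'_1\mid Y'_1+X'_2=w)$. Feeding \eqref{eq:SumsetsDouble2} together with $\H[X'_1+Y'_2]=\H[X']+\H[Y']-\s[X';Y']$ into the fibring identity \eqref{eq:SecondFibring} (applied to $X',Y'$, dropping the nonnegative mutual-information term) and then using $\s[X';Y']\ge(\eta_0-2\varepsilon_0)(\H[X']+\H[Y'])$, one gets
\[
\s[X'_1\mid X'_1+Y'_2;\,Y'_1\mid Y'_1+X'_2]=\E_{u,w}\,\s[X'_u;Y'_w]\ \ge\ \bigl(2\eta_0^2-O(\varepsilon_0)\bigr)\,(\H[X']+\H[Y']).
\]
Next I would produce, for each $(u,w)$, a subspace $V(u,w)$ with $\H[\U_{V(u,w)}]\le 7(\H[X'_u]+\H[Y'_w])$ and with $\E_{u,w}\bigl(\H[\pi_{V(u,w)}(X'_u)]+\H[\pi_{V(u,w)}(Y'_w)]\bigr)\le 480\kappa$, where $\kappa=8\varepsilon_0(\H[X]+\H[Y])$. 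This is exactly the conclusion of the Endgame Lemma~\ref{lem:Endgame} applied to $X',Y'$ with $\eta=\eta_0-2\varepsilon_0$ and this $\kappa$: its first two hypotheses follow from \eqref{eq:SumsetsDouble1} and \eqref{eq:SumsetsDouble2}, the extra $2\varepsilon_0$ in $\eta$ and the $4\varepsilon_0(\H[X]+\H[Y])$ slack being absorbed into $\kappa$ (using $\H[X'_1+X'_2]\le 2\H[X]$, etc.). The two fiber hypotheses of \Cref{lem:Endgame} require a case split: either they already hold with this $\kappa$, so \Cref{lem:Endgame} applies as stated, or one of the fiber moves improves the doubling ratio past $\kappa$, in which case we apply $\mb B(\eta_0,\varepsilon_0,L_0)$ to that move-pair — iterating as in the proof of \Cref{lem:MakeSumsetsNotDouble} if necessary — either to restore the missing hypotheses or to force the entropy drop for $(X',Y')$ outright. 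This last point — keeping the Endgame error term genuinely small, i.e. controlling what happens when a fiber move is ``too good'' — is where I expect the main difficulty to lie; the remainder is bookkeeping.

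Granting the subspaces $V(u,w)$, the fibring inequality gives $\s[X'_u\mid\pi_{V(u,w)}(X'_u);Y'_w\mid\pi_{V(u,w)}(Y'_w)]\ge\s[X'_u;Y'_w]-\H[\pi_{V(u,w)}(X'_u)]-\H[\pi_{V(u,w)}(Y'_w)]$, so, taking expectations and combining the two bounds above with $480\kappa\ll\eta_0^2(\H[X']+\H[Y'])$ (valid once $C$ is large),
\[
\E_{u,w}\,\s[X'_u\mid\pi_{V(u,w)}(X'_u);\,Y'_w\mid\pi_{V(u,w)}(Y'_w)]\ \ge\ \eta_0^2\,(\H[X']+\H[Y']).
\]
This is hypothesis \eqref{eq:LocalStructureHyp} of \Cref{lem:LocalToGlobal} with $\zeta=\eta_0^2$, applied to the couplings $(X'_1,X'_1+Y'_2)$ and $(Y'_1,Y'_1+X'_2)$, whose required cross-independence holds because $\{X'_1,Y'_2\}$ and $\{Y'_1,X'_2\}$ are disjoint. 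It outputs $\bar V\subset G/V_1$ with $\H[\U_{\bar V}]\le\tfrac{8}{\eta_0^4}\E_{u,w}\H[\U_{V(u,w)}]\le\tfrac{8}{\eta_0^4}\cdot 7\cdot 2\s[X';Y']\le\tfrac{112}{\eta_0^4}(\H[X]+\H[Y])$ (using $\E_{u,w}(\H[X'_u]+\H[Y'_w])=2\s[X';Y']$) and $\H[Y'\mid\pi_{\bar V}(Y')]\ge\tfrac{\eta_0^2}{4}(\H[X']+\H[Y'])\ge\tfrac{\eta_0^2}{5}(\H[X]+\H[Y])$. Let $\hat V\subset G$ be the preimage of $\bar V$ under $\pi_{V_1}$, so $V_1\subset\hat V$, $\pi_{\hat V}=\pi_{\bar V}\circ\pi_{V_1}$, and $\H[\U_{\hat V}]=\H[\U_{V_1}]+\H[\U_{\bar V}]\le\bigl(4C\eta_0^{-2}L_0+112\eta_0^{-4}\bigr)(\H[X]+\H[Y])\le C^3\eta_0^{-4}\max\{L_0,1\}(\H[X]+\H[Y])$. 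Finally
\[
\H[\pi_{\hat V}(X)]+\H[\pi_{\hat V}(Y)]=\H[\pi_{\bar V}(X')]+\H[\pi_{\bar V}(Y')]\le\H[X']+\H[Y']-\H[Y'\mid\pi_{\bar V}(Y')]\le\bigl(1-\tfrac{\eta_0^2}{5}\bigr)(\H[X]+\H[Y]),
\]
which is at most $(1-C^{-1}\eta_0^2)(\H[X]+\H[Y])$ once $C\ge 5$. Taking $C=2^{15}$ makes every constant inequality above hold, and we have established $\mb A\bigl(\eta_0-C^{-1}\eta_0^2,\,C^3\eta_0^{-4}\max\{L_0,1\},\,C^{-1}\eta_0^2\bigr)$.
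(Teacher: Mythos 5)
Most of your argument tracks the paper's proof: the reduction via \Cref{lem:MakeSumsetsNotDouble}, the unconditional lower bound $\E_{u,w}\s[X'_u;Y'_w]\ge(2\eta_0^2-O(\varepsilon_0))(\H[X']+\H[Y'])$ from the fibring identity, and the endgame branch (Endgame Lemma $\to$ fibring $\to$ \Cref{lem:LocalToGlobal} with $\zeta\approx\eta_0^2$) are all correct and are exactly what the paper does in its ``Case 3''. The constant bookkeeping at the end is also fine.

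The genuine gap is the branch you yourself flag as the main difficulty: what to do when one of the two \emph{fiber} hypotheses of \Cref{lem:Endgame} fails, i.e.\ when $\s[X'_1\mid X'_1+X'_2;Y'_1\mid Y'_1+Y'_2]$ or $\s[X'_1\mid X'_1+Y'_2;Y'_1\mid Y'_1+X'_2]$ exceeds the endgame threshold. Your proposed fix --- ``apply $\mb B(\eta_0,\varepsilon_0,L_0)$ to that move-pair, iterating as in the proof of \Cref{lem:MakeSumsetsNotDouble}'' --- does not work as stated. A fiber move does not produce a single pair of independent $G$-valued random variables: it produces the family $\{(X'_u,Y'_w)\}_{u,w}$, so $\mb B$ can only be applied fiber-by-fiber, yielding a subspace $V(u,w)$ that varies with $(u,w)$. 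There is then no single quotient of $(X',Y')$ to pass to, so the iteration scheme of \Cref{lem:MakeSumsetsNotDouble} (which relies on replacing $(X,Y)$ by $(\pi_V(X),\pi_V(Y))$ and re-testing) has nothing to iterate on; this is precisely the local-to-global obstruction the paper identifies in its introduction. The correct resolution (the paper's Cases 1 and 2) is to reuse the machinery you already deploy in the endgame branch: apply $\mb B(\eta_0,\varepsilon_0,L_0)$ to each fiber pair $(X'_u,Y'_w)$ to get $V(u,w)$ with $\H[\U_{V(u,w)}]\le L_0(\H[X'_u]+\H[Y'_w])$; the fibring identity combined with the ``too good'' assumption (with threshold $\eta_0(\H[X'_1\mid\cdot]+\H[Y'_1\mid\cdot])+8\varepsilon_0(\H[X]+\H[Y])$) shows that the residual interaction $\E_{u,w}\s[X'_u\mid\pi_{V(u,w)}(X'_u);Y'_w\mid\pi_{V(u,w)}(Y'_w)]$ is still at least $7\varepsilon_0(\H[X]+\H[Y])$; then \Cref{lem:LocalToGlobal} with $\zeta\approx\varepsilon_0$ produces a single subspace of entropy $O(\varepsilon_0^{-2}L_0(\H[X]+\H[Y]))$ removing an $\varepsilon_0$-fraction of the entropy. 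Without this step your proof covers only the endgame case, and the $\varepsilon_0^{-2}L_0$ term in $L_1$ (which dominates the paper's final bound) never appears in your accounting.
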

\begin{proof}
Assume $\mb B(\eta_0, \varepsilon_0, L_0)$ holds. 
Our goal is to prove $\mb A(\eta_0 - \varepsilon_0, L_1, c)$ holds for some $L_1 > 0$ and $c > 0$. 
We will need to assume $\varepsilon_0 \leq 2^{-15} \eta_0^2$. The proof will give
\begin{equation}\label{eq:OverallBounds}
\begin{split}
    L_1 &= \max \{ 12 \varepsilon_0^{-2} L_0, 2^{12} \eta_0^{-4} \}, \\ 
    c &= \min\{\varepsilon_0, \frac{\eta_0^2}{32}\}.
\end{split}
\end{equation}
In the statement of~\Cref{prop:InductiveStep}, we specialize to $\varepsilon_0 = C^{-1} \eta_0^2$ for $C = 2^{15}$. 

Let $\tilde X$ and $\tilde Y$ be random variables. As in the hypotheses of $\mb A(\eta_0 - \varepsilon_0, \bullet, \bullet)$, assume $\s[\tilde X ; \tilde Y] \geq (\eta_0 - \varepsilon_0) (\H[\tilde X] + \H[\tilde Y])$. 

Start by applying~\Cref{lem:MakeSumsetsNotDouble} to obtain a subspace $V_0$ with $\H[\U_{V_0}] \leq \frac{4}{\varepsilon_0} L_0$ such that, letting
\[
X = \pi_{V_0}(\tilde X)\quad\text{and}\quad Y = \pi_{V_0}(\tilde Y),
\]
we have
\begin{align*}
\s[X_1+X_2; Y_1+Y_2] &\leq \eta_0 (\H[X_1+X_2] + \H[Y_1+Y_2]) +  4 \varepsilon_0 (\H[\tilde X] + \H[\tilde Y]),\\ 
\s[X_1+Y_2 ; X_2 + Y_1] &\leq \eta_0 (\H[X_1+Y_2] + \H[X_2+Y_1]) +  4\varepsilon_0  (\H[\tilde X] + \H[\tilde Y]).
\end{align*}
We may assume 
\[
\H[X] + \H[Y] \geq (1-c) (\H[\tilde X] + \H[\tilde Y]),
\]
because otherwise we are done. Because $c \leq \frac{1}{2}$, 
\begin{equation}\label{eq:NonDoubling}
\begin{split}
\s[X_1+X_2; Y_1+Y_2] &\leq \eta_0 (\H[X_1+X_2] + \H[Y_1+Y_2]) +  8\varepsilon_0(\H[X] + \H[Y]),\\ 
\s[X_1+Y_2 ; X_2 + Y_1] &\leq \eta_0 (\H[X_1+Y_2] + \H[X_2+Y_1]) +  8\varepsilon_0(\H[X] + \H[Y]).    
\end{split}
\end{equation}
We can also get a lower bound on $\s[X ; Y]$. By~\eqref{eq:FibringIdentity}, 
\begin{align*}
\s[X ; Y] &\geq \s[\tilde X; \tilde Y] - \s[\tilde X | \pi_{V_0}(\tilde X) ; \tilde Y | \pi_{V_0}(\tilde Y)]  \\ 
&\geq \s[\tilde X; \tilde Y] - \H[\tilde X | \pi_{V_0}(\tilde X)] - \H[ \tilde Y | \pi_{V_0}(\tilde Y)] \\ 
&\geq \s[\tilde X; \tilde Y] - c (\H[\tilde X] + \H[\tilde Y]) \\
&\geq (\eta_0 - \varepsilon_0 - c) \H[\tilde X ; \tilde Y] \geq (\eta_0 - 2\varepsilon_0) \H[\tilde X ; \tilde Y]
\end{align*}
where in the last inequality we assume $c \leq \varepsilon_0$. 

At this point we split into three cases, depending on the doubling mass in the fibers. 

\bigskip
\noindent \textbf{Case 1: $\s[X_1 | X_1 + X_2 ; Y_1 | Y_1 + Y_2] \geq \eta_0 (\H[X_1 | X_1 + X_2] + \H[Y_1 | Y_1 + Y_2]) + 8\varepsilon_0(\H[X] + \H[Y]) $.}

We apply~\Cref{lem:LocalToGlobal} (local to global structure), with 
\[
X = X_1,\; U = X_1+X_2,\quad Y = Y_1,\; W = Y_1+Y_2
\]
and we use the notation
\[
X_u = (X | U = u),\quad Y_w = (Y | W = w). 
\]
For $(u, w) \in \supp U \times \supp W$, define $V(u, w)$ to be the subspace produced by applying $\mb B(\eta_0, \varepsilon_0, L_0)$ to $X_u, Y_w$. We are guaranteed that 
\[
\H[\U_{V(u,w)}] \leq L_0 (\H[X_u] + \H[Y_w])
\]
and 
\[
\s[\pi_{V(u,w)}(X_u) ; \pi_{V(u,w)}(Y_w)] \leq \eta_0 (\H[\pi_{V(u,w)}(X_u)] + \H[\pi_{V(u,w)}(Y_w)]) + \varepsilon_0 (\H[X_u] + \H[Y_w]). 
\]
By~\eqref{eq:FibringIdentity}, 
\begin{align*}
\s[X_u | \pi_{V(u,w)}(X_u) ; Y_w | \pi_{V(u,w)}(Y_w)] &\geq s[X_u ; Y_w] - \s[\pi_{V(u,w)}(X_u) ; \pi_{V(u,w)}(Y_w)] \\ 
&\geq s[X_u ; Y_w] - \eta_0 (\H[\pi_{V(u,w)}(X_u)] + \H[\pi_{V(u,w)}(Y_w)]) - \varepsilon_0 (\H[X_u] + \H[Y_w]).
\end{align*}
Taking an expected value over $u\sim U$ and $w\sim W$, and using the Case 1 assumption, gives
\[
\E_{u,w}\s[X_u | \pi_{V(u,w)}(X_u) ; Y_w | \pi_{V(u,w)}(Y_w)] \geq 7\varepsilon_0 (\H[X] + \H[Y]).
\]
By~\Cref{lem:LocalToGlobal}, there exists a subspace $\bar V$ with
\[
\H[\U_{\bar V}] \leq 8 \varepsilon_0^{-2} \E_{u,w} L_0 (\H[X_u] + \H[Y_w]) \leq 8 \varepsilon_0^{-2} L_0 (\H[X] + \H[Y]),
\]
such that 
\[
\H[\pi_{\bar V}(X)] + \H[\pi_{\bar V}(Y)] \leq ( 1 - \varepsilon_0)(\H[X] + \H[Y]) \leq ( 1 - \varepsilon_0)(\H[\tilde X] + \H[\tilde Y]).
\]
Thus $V_0 + \bar V$ satisfies the conclusion with
\begin{equation}\label{eq:Case1Bounds}
\begin{split}
    L_1 &= 8\varepsilon_0^{-2} L_0 + 4\varepsilon_0^{-1} L_0 \leq 12\varepsilon_0^{-2} L_0, \\ 
    c &= \varepsilon_0. 
\end{split}
\end{equation}

\bigskip
\noindent \textbf{Case 2: $\s[X_1 | X_1 + Y_2 ; Y_1 | Y_1 + X_2] \geq \eta_0 (\H[X_1 | X_1 + Y_2] + \H[Y_1 | Y_1 + X_2]) + 8\varepsilon_0(\H[X] + \H[Y]) $.}
This case is identical to the prior case, but with $U = X_1 + Y_2$ and $W = Y_1 + X_2$ instead.

\bigskip
\noindent \textbf{Case 3: Endgame.}
Recall
\[
\H[X+Y] \geq (\eta_0 - 2\varepsilon_0) (\H[X] + \H[Y]). 
\]

Combine~\eqref{eq:NonDoubling} with the assumption that we are not in Case 1 or Case 2 to obtain the four endgame inequalities,
\begin{align*}
s[X_1+X_2; Y_1+Y_2]
&\leq (\eta_0-2\varepsilon_0)\bigl(\H[X_1+X_2] + \H[Y_1+Y_2]\bigr)
     + 12\varepsilon_0 \bigl(\H[X] + \H[Y]\bigr), \\
s[X_1+Y_2; X_2+Y_1]
&\leq (\eta_0-2\varepsilon_0)\bigl(\H[X_1+Y_2] + \H[X_2+Y_1]\bigr)
     + 12\varepsilon_0 \bigl(\H[X] + \H[Y]\bigr), \\
s[X_1 \mid X_1+X_2 ; Y_1 \mid Y_1+Y_2]
&\leq (\eta_0-2\varepsilon_0)\bigl(\H[X_1 \mid X_1+X_2]
     + \H[Y_1 \mid Y_1+Y_2]\bigr)
     + 12\varepsilon_0 \bigl(\H[X] + \H[Y]\bigr), \\
s[X_1 \mid X_1+Y_2 ; Y_1 \mid Y_1+X_2]
&\leq (\eta_0-2\varepsilon_0)\bigl(\H[X_1 \mid X_1+Y_2]
     + \H[Y_1 \mid Y_1+X_2]\bigr)
     + 12\varepsilon_0 \bigl(\H[X] + \H[Y]\bigr).
\end{align*}

Let 
\[
U = X_1 + Y_2,\quad W = Y_1+X_2,\qquad X_u = (X | U = u),\quad Y_w = (Y | W = w).
\]
We apply the endgame~(\Cref{lem:Endgame}) and find that for every $(u, w) \in \supp U \times \supp W$, there is a subspace $V(u, w)$ with $\H[\U_{V(u,w)}] \leq 7(\H[X_u] + \H[Y_w])$, such that
\begin{align*}
\E_{u,w} (\H[\pi_{V(u,w)}(X_u)] + \H[\pi_{V(u,w)}(Y_w)]) &\leq 480\cdot 12\, \varepsilon_0 \bigl(\H[X] + \H[Y]\bigr) \\
&\leq 2^{13} \varepsilon_0 \bigl(\H[X] + \H[Y]\bigr).
\end{align*}
By~\eqref{eq:FibringIdentity},
\begin{align*}
\E_{u,w} \s[X_u | \pi_{V(u,w)}(X_u) ; Y_w | \pi_{V(u,w)}(Y_w)] &\geq \E_{u,w} \s[X_u  ; Y_w] - \E_{u,w} \s[\pi_{V(u,w)}(X_u) ; \pi_{V(u,w)}(Y_w)] \\ 
&\geq \E_{u,w} \s[X_u  ; Y_w] - 2^{13} \varepsilon_0 \bigl(\H[X] + \H[Y]\bigr). 
\end{align*}
To estimate the first term, use~\eqref{eq:SecondFibring} to relate
\begin{align*}
\s[X_1 + Y_2 ; Y_1 + X_2] + \E_{u,w} \s[X_u  ; Y_w] &= \s[X_1 + Y_2 ; Y_1 + X_2] + \s[X_1 | X_1+Y_2 ; Y_1 | Y_1 + X_2] \\ 
&=  2 \s[X : Y] \geq 2(\eta_0 - 2\varepsilon_0)(\H[X]+\H[Y]) \\ 
&= (\eta_0 - 2\varepsilon_0)(\H[X_1 | X_1+Y_2] + \H[Y_1 | Y_1+X_2]) + \\
&\qquad (\eta_0-2\varepsilon_0)(\H[X_1+Y_2] + \H[Y_1+X_2]).
\end{align*}
Use the upper bound on $\s[X_1+Y_2 ; Y_1+X_2]$ from the endgame inequalities to estimate
\begin{align*}
\E_{u,w} \s[X_u  ; Y_w] &\geq (\eta_0 - 2\varepsilon_0)(\H[X_1 | X_1+Y_2] + \H[Y_1 | Y_1+X_2]) - 12\varepsilon_0 (\H[X] + \H[Y]) \\ 
&\stackrel{\eqref{eq:DoublingMassFiberSize}}{\geq} (\eta_0 - 2\varepsilon_0) 2\s[X ; Y] - 12\varepsilon_0 (\H[X] + \H[Y]) \\ 
&\geq (2(\eta_0 - 2\varepsilon_0)^2 - 12\varepsilon_0) (\H[X] + \H[Y]).
\end{align*}
Enforce 
\[
\varepsilon_0 \leq \frac{\eta_0}{4}\quad\text{and}\quad \varepsilon_0 \leq \frac{\eta_0^2}{48}
\]
so that 
\[
\E_{u,w} \s[X_u  ; Y_w] \geq \frac{\eta_0^2}{4} (\H[X] + \H[Y]).
\]
Next, enforce 
\[
2^{13}\varepsilon_0 \leq  \frac{\eta_0^2}{8} \Longleftrightarrow \varepsilon_0 \leq 2^{-15} \eta_0^2
\]
so that 
\[
\E_{u,w} \s[X_u | \pi_{V(u,w)}(X_u) ; Y_w | \pi_{V(u,w)}(Y_w)] \geq \frac{\eta_0^2}{8} (\H[X] + \H[Y]). 
\]
By~\Cref{lem:LocalToGlobal}, there exists a subspace $\bar V$ with 
\[
\H[\U_{\bar V}] \leq 8\cdot 64\cdot \eta_0^{-4} \E_{u,w} 7(\H[X_u] + \H[Y_w]) \leq 2^{12} \eta_0^{-4}  (\H[X] + \H[Y])
\]
and 
\[
\H[\pi_{\bar V}(X)] + \H[\pi_{\bar V}(Y)] \leq (1 - \frac{\eta_0^2}{32}) (\H[X] + \H[Y]) \leq (1 - \frac{\eta_0^2}{32}) (\H[\tilde X] + \H[\tilde Y]).
\]
This satisfies the conclusions with
\begin{equation}\label{eq:Case2Bounds}
\begin{split}
    L_1 &= 2^{12} \eta_0^{-4} + \frac{4}{\varepsilon_0} L_0, \\ 
    c &= \frac{\eta_0^2}{32}.
\end{split}
\end{equation}
Considering the Case 1 and Case 2 bounds~\eqref{eq:Case1Bounds}, and the Case 3 bounds~\eqref{eq:Case2Bounds}, gives~\eqref{eq:OverallBounds}. 
\end{proof}

\begin{theorem}\label{thm:statement-B}
There exists a constants $C > 0$ such that $\mb B(\eta,  \eta^2, \eta^{- C \eta^{-1}})$ holds for all $\eta \in (0,1/2]$.
\end{theorem}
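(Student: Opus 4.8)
The plan is to prove $\mb B(\eta,\eta^2,\eta^{-C\eta^{-1}})$ by running the inductive step of \Cref{prop:InductiveStep} downward from $\eta = 1/2$, with $C$ an absolute constant fixed at the end. Write $C_0 = 2^{15}$ for the constant appearing in that proposition. Observe first that $\mb B(\eta,\varepsilon,L)$ weakens monotonically when any of $\eta$, $\varepsilon$, $L$ is increased, so it will be enough to establish $\mb B$ at some value $\eta_m \le \eta$ with $\varepsilon = C_0^{-1}\eta_m^2 \le \eta^2$ and a good bound on the subspace size. The base case is immediate: for independent $X,Y$ we have $\H[X+Y] \ge \H[X+Y\mid Y] = \H[X\mid Y] = \H[X]$ and likewise $\H[X+Y]\ge\H[Y]$, so $\H[X+Y]\ge\tfrac12(\H[X]+\H[Y])$; taking $V=\{0\}$ shows $\mb B(1/2,\varepsilon,0)$ holds for every $\varepsilon\in(0,1]$, and hence (increasing $L$) so does $\mb B(1/2,\,C_0^{-1}(1/2)^2,\,1)$.

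Now set up the recursion. Define $\eta_0 = 1/2$ and $\eta_{k+1} = \eta_k - C_0^{-1}\eta_k^2$; then $\eta_k$ is decreasing and stays in $(0,1/2]$. If $\mb B(\eta_k,\,C_0^{-1}\eta_k^2,\,L_k)$ holds, \Cref{prop:InductiveStep} yields $\mb A\!\left(\eta_{k+1},\,C_0^3\eta_k^{-4}\max\{L_k,1\},\,C_0^{-1}\eta_k^2\right)$, and part (b) of \Cref{lem:BasicRelationsAB}, applied with $\varepsilon = C_0^{-1}\eta_{k+1}^2 \in (0,1)$, gives $\mb B(\eta_{k+1},\,C_0^{-1}\eta_{k+1}^2,\,L_{k+1})$ with
\[
L_{k+1} = \Bigl\lceil \tfrac{\log(C_0\eta_{k+1}^{-2})}{C_0^{-1}\eta_k^2} \Bigr\rceil\, C_0^3\eta_k^{-4}\max\{L_k,1\}.
\]
Starting from $L_0 = 1$, this recursion continues for all $k$, and — using $\eta_{k+1}\ge\eta_k/2$ to control the logarithm together with $\log\eta_k^{-1}\le\eta_k^{-1}$ — one checks the clean bound $L_{k+1}\le D\,\eta_k^{-7}L_k$ for an absolute constant $D$ (the exponent $7$ is wasteful but harmless).

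To extract the final estimate I analyze the sequence $\eta_k$. From $\tfrac1{\eta_{k+1}}-\tfrac1{\eta_k} = \tfrac{C_0^{-1}\eta_k}{\eta_{k+1}}\in[C_0^{-1},\,2C_0^{-1}]$ we get $\tfrac1{\eta_k}\in[2+k/C_0,\,2+2k/C_0]$, so $\eta_k\downarrow 0$ and, for a given target $\eta\in(0,1/2]$, the first index $m$ with $\eta_m\le\eta$ satisfies $m\le C_0/\eta$, while $\eta_j>\eta$ for all $j<m$. Iterating $L_{k+1}\le D\eta_k^{-7}L_k$ over $k<m$ and using $\eta_k>\eta$ there gives $L_m\le(D\,\eta^{-7})^{m}\le(D\,\eta^{-7})^{C_0/\eta}$. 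For $\eta\le 1/D$ this is at most $\eta^{-8C_0\eta^{-1}}$ (absorbing $D$ into $\eta^{-1}$), and for $\eta>1/D$ both $m$ and $L_m$ are bounded by absolute constants while $\eta^{-C\eta^{-1}}\ge 4^C$, so a single large enough $C$ dominates in both regimes. Finally, $\mb B(\eta_m,\,C_0^{-1}\eta_m^2,\,L_m)$ holds with $\eta_m\le\eta$ and $C_0^{-1}\eta_m^2\le\eta^2$, so monotonicity of $\mb B$ gives $\mb B(\eta,\eta^2,L_m)$, and $L_m\le\eta^{-C\eta^{-1}}$, as required.

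All the conceptual content lives in \Cref{prop:InductiveStep}; the proof of \Cref{thm:statement-B} is just bookkeeping. The one thing that genuinely needs care — and which dictates the final $\eta^{-C\eta^{-1}}$ shape — is the growth of $L$: each iteration multiplies $L$ by a factor that is only polynomial in $\eta^{-1}$ (the conversion in part (b) of \Cref{lem:BasicRelationsAB} contributes an extra $\log(\varepsilon^{-1})/c = O(\eta^{-2}\log\eta^{-1})$, still polynomial), while reaching a target $\eta$ costs $O(\eta^{-1})$ iterations; since $(\mathrm{poly}(\eta^{-1}))^{O(\eta^{-1})} = \eta^{-O(\eta^{-1})}$, this is exactly the claimed bound, and it is consistent with the Bernoulli example forcing $L$ at least exponential in $\eta^{-1}$.
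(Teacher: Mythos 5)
Your proof is correct and follows essentially the same route as the paper: the same base case at $\eta=1/2$, the same alternation of \Cref{prop:InductiveStep} with \Cref{lem:BasicRelationsAB}(b), and the same $O(\eta^{-1})$ count of iterations. The only (harmless) difference is in the final bookkeeping: you bound every multiplicative factor uniformly by $D\eta^{-7}$ using $\eta_k>\eta$ along the whole run, whereas the paper groups the steps into dyadic scales and sums a geometric series in the exponent; both give $\eta^{-O(\eta^{-1})}$.
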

\begin{proof}
The base case for the induction is  
\begin{align*}
    \mb B(\frac{1}{2}, 0, 0) \text{ holds}. 
\end{align*}
Indeed, for any independent random variables $X,Y$, 
\[
\H[X+Y] \geq \max\{\H[X], \H[Y]\} \geq \frac{1}{2}(\H[X] + \H[Y]). 
\]
Let $\eta_0 \in (0, 1/2]$ and $\eta_1 = \eta_0 - C_0^{-1} \eta_0^2$, where $C_0 = 2^{15}$ is the constant from~\Cref{prop:InductiveStep}.  
\Cref{prop:InductiveStep} and \Cref{lem:BasicRelationsAB} together give
\begin{align*}
    \mb B(\eta_0, C_0^{-1} \eta_0^2, L_0) &\Longrightarrow \mb A(\eta_1, C_0^3 \eta_0^{-4} \max\{L_0, 1\}, C_0^{-1} \eta_0^2) \\ 
    &\Longrightarrow \mb B\bigl(\eta_1 ,\; C_0^{-1} \eta_1^2,\; \lceil \frac{ \log C_0 \eta_1^{-2} }{C_0^{-1} \eta_0^2} \rceil C_0^3 \eta_0^{-4} \max\{L_0, 1\}\bigr). 
\end{align*}
Let $L(\eta)$ be the best constant so that $\mb B(\eta, C_0^{-1} \eta^2, L(\eta))$ holds. The inductive step implies that, for $\eta + C_0^{-1} \eta^2 \leq 1/2$,
\begin{align*}
L(\eta) &\leq 2\, C_0^5 \eta^{-6} \max\{L(\eta + C_0^{-1}\eta^2), 1\}.
\end{align*}
Iterating this inequality implies that, for $2\eta \leq 1/2$, 
\[
L(\eta) \leq (2C_0^5 \eta^{-6})^{2C_0 \eta^{-1}} \max\{ L(2\eta), 1\} \leq \eta^{-C_1 \eta^{-1}} \max\{L(2\eta), 1\}
\]
where $C_1 > 0$ is a new constant. 
Iterating this inequality logarithmically many times yields 
\[
L(\eta) \leq \eta^{-C_1 \eta^{-1} - \frac{1}{2} C_1 \eta^{-1} - \frac{1}{4} C_1 \eta^{-1} - \cdots} \leq \eta^{-2C_1 \eta^{-1}}
\]
as desired. 
\end{proof}

Theorem \ref{thm:main-entropy} now follows from Theorem \ref{thm:statement-B} by setting $\eta = \varepsilon/2$ and $L = \eta^{-C \eta^{-1}}$.%\footnote{This upper bound on $L$ is nearly optimal: by analysing the Bernoulli example one can show that the function $L$ in Theorem \ref{thm:main-entropy} must be at least exponential in $\varepsilon^{-1}$.}

\begin{proof}[Proof of Corollary \ref{cor:rich-cosets}]
    Let $X, Y$ be independent and let $s = \s[X; Y] = \H[X]+\H[Y]-\H[X+Y]$. By Theorem \ref{thm:main-entropy} there is a subspace $V \subset G$ with $\H[\U_V] \le L(\H[X]+\H[Y])$ and $\s[\pi_V(X); \pi_V(Y)] \le \varepsilon(\H[X]+\H[Y])$.
    So by (\ref{eq:TrivialEstimateDoublingMass}) and (\ref{eq:FibringIdentity}) we get 
    \[
     \min(\H[X|\pi_V(X)], \H[Y|\pi_V(Y)]) \ge \s[X|\pi_V(X) ; Y|\pi_V(Y)] \ge s - \varepsilon(\H[X]+\H[Y]).
    \]
\end{proof}

\begin{proof}[Proof of Corollary \ref{cor:many-sums}]
Fix $\varepsilon>0$ and let $X_1, \ldots, X_k$ be independent random variables. Let $\delta= \frac{\varepsilon}{k-1}$. 
We iteratively apply Corollary \ref{cor:rich-cosets} as follows. Start with $W_0=\{0\}$ and suppose that at step $i\ge 0$ we have constructed some subspace $W_i\subset G$.
If for some $j \in\{1, \ldots, k-1\}$ we have 
\begin{align*}
\H[\pi_{W_i}(X_1)+\ldots+\pi_{W_i}(X_{j}) +\pi_{W_i}(X_{j+1})] \le \H[\pi_{W_i}(X_1)+\ldots+\pi_{W_i}(X_{j})]+ \H[\pi_{W_i}(X_{j+1})] \\- \delta (\H[X_1]+\ldots+\H[X_k])    
\end{align*}
then we apply Corollary \ref{cor:rich-cosets} with $\varepsilon = \delta/2$, $X = X_1+\ldots+X_j$ and $Y=X_{j+1}$ and find a subspace $V_{i+1} \subset G/W_{i}$ so that for some $L = L(\delta/2)$ we have $\H[\U_{V_{i+1}} ] \le L (\H[X_1]+\ldots+\H[X_k])$ and 
for $W_{i+1} = \pi_{W_i}^{-1}(V_{i+1})$ we have
\[
\H[\pi_{W_{i+1}}(X_{j+1})] \le \H[\pi_{W_{i}}(X_{j+1})] - \frac{\delta}2 (\H[X_1]+\ldots+\H[X_k]).
\]
Since the total entropy of $(\pi_{W_i}(X_1), \ldots, \pi_{W_i}(X_k))$ drops by at least $\frac{\delta}2 (\H[X_1]+\ldots+\H[X_k])$ at every step, this process must stop in $m\le \frac{2}{\delta}$ steps. For the resulting subspace $V=W_m$ we then obtain:
\[
\H[\U_V] \le m L (\H[X_1]+\ldots+\H[X_k]) \le \frac{2}{\delta}L (\H[X_1]+\ldots+\H[X_k])
\]
and
\begin{align*}
\H[\pi_V(X_1+\ldots+X_k)] \ge \H[\pi_V(X_1+\ldots+X_{k-1})] + \H[\pi_V(X_k)] - \delta (\H[X_1]+\ldots+\H[X_k])    \\
\ge \ldots \ge \H[\pi_V(X_1)]+\ldots + \H[\pi_V(X_k)] - (k-1) \delta (\H[X_1]+\ldots+\H[X_k])
\end{align*}
and by $\delta = \frac{\varepsilon}{k-1}$ this gives us the desired bound. 
\end{proof}


\begin{thebibliography}{99}

\bibitem{Abbe2024}
E. Abbe, C. Sandon, V. Shashkov, and M. Viazovska, \emph{Polynomial Freiman-Ruzsa, Reed-Muller codes and Shannon capacity}, arXiv preprint arXiv:2411.13493.

\bibitem{BatemanKatz2011}
M. Bateman and N. H. Katz. \emph{Structure in additively nonsmoothing sets}, arXiv preprint arXiv:1104.2862 (2011).

\bibitem{BatemanKatz2012} 
M. Bateman and N. H. Katz. \emph{New bounds on cap sets}, Journal of the American Mathematical Society 25.2 (2012): 585-613.

\bibitem{Freiman1973}
G.~A. Freiman, \emph{Foundations of a structual theory of set addition}, Translation of Math. Monographs 37 (1973).

\bibitem{GowersGreenMannersTao2025}
W.~T. Gowers, B.~Green, F.~Manners, and T.~Tao,
\emph{On a conjecture of Marton},

\bibitem{GowersGreenMannersTao2024}
W. ~T. Gowers, B.~Green, F.~Manners, and T. ~Tao. (2024). \emph{Marton's conjecture in abelian groups with bounded torsion},
arXiv preprint arXiv:2404.02244.


\bibitem{GreenMannersTao2025}
B.~ Green, F.~Manners, and T.~ Tao. 
\emph{Sumsets and entropy revisited.}
Random Structures \& Algorithms 66.1 (2025): e21252.

\bibitem{GreenRuzsa2007}
B. Green and I. Z. Ruzsa. \emph{Freiman's theorem in an arbitrary abelian group}, Journal of the London Mathematical Society 75.1 (2007): 163-175.

\bibitem{GreenRuzsa2019}
B. Green and I. Z. Ruzsa, \emph{On the arithmetic Kakeya conjecture of Katz and Tao}, Periodica Mathematica Hungarica 78.2 (2019): 135-151.

\bibitem{Hochman2014}
M. Hochman, \emph{On self-similar sets with overlaps and inverse theorems for entropy}, Annals of Mathematics (2014): 773-822.


%\bibitem{Kaimanovich1983} V.~A. Kaimanovich and A. M. Vershik. "Random walks on discrete groups: boundary and entropy." The annals of probability 11.3 (1983): 457-490.


\bibitem{Kontoyiannis2014}
I. Kontoyiannis and M. Madiman. \emph{Sumset and inverse sumset inequalities for differential entropy and mutual information}, IEEE transactions on information theory 60.8 (2014): 4503-4514.

\bibitem{Liao2024}
 J.-J. Liao, \emph{Improved Exponent for Marton's Conjecture in $\mathbb {F}_2^ n$}, arXiv preprint arXiv:2404.09639 (2024).

\bibitem{Manners2017}
F. Manners, \emph{Formulations of the PFR Conjecture over $\mathbb {Z} $}, arXiv preprint arXiv:1709.09738 (2017).

\bibitem{Raghavan2025}
R. Raghavan, \emph{Improved Bounds for the Freiman-Ruzsa Theorem}, arXiv preprint arXiv:2512.11217 (2025).

\bibitem{Ruzsa2009}
I. Z. Ruzsa, \emph{Sumsets and entropy}, Random Structures \& Algorithms 34.1 (2009): 1-10.

\bibitem{Shkredov2013} I. Shkredov, \emph{Some new results on higher energies}, Transactions of the Moscow Mathematical Society 74 (2013): 31-63.

\bibitem{Shkredov2014} I. Shkredov, \emph{Energies and Structure of Additive Sets}, The Electronic Journal of Combinatorics (2014): P3-44.

\bibitem{Tao2009} T.~Tao. \emph{An entropy Plünnecke-Ruzsa inequality}, What's New blog post (2009). 

\bibitem{Tao2010} T.~Tao. \emph{Sumset and inverse sumset theory for Shannon entropy}, 
Combinatorics, Probability and Computing 19.4 (2010): 603-639.

\end{thebibliography}
\end{document}